\documentclass{article}
\usepackage[utf8]{inputenc}
\usepackage{amssymb,latexsym}
\usepackage{amsmath, standalone}
\usepackage{graphicx}
\usepackage{textcomp}

\usepackage{pgfplots}

\usepackage{amsthm,amssymb,enumerate,graphicx, tikz}
\usepackage{amscd}
\usepackage{setspace}
\usepackage{comment, parskip}
\usepackage{hyperref}
\usepackage{xcolor}
\usepackage[capitalise]{cleveref}
\usepackage{subcaption}
\usepackage{enumerate}

\usepackage[left=1in,right=1in,top=1in,bottom=1in,bindingoffset=0cm]{geometry}

\newtheorem{theorem}{Theorem}[section]

\newtheorem*{theorem*}{Theorem}

\newtheorem{lemma}[theorem]{Lemma}

\newtheorem{question}[theorem]{Question}

\newtheorem{conjecture}[theorem]{Conjecture}
\theoremstyle{definition}

\theoremstyle{remark}

\newcommand{\R}{\mathcal{R}}

\newcommand{\I}{\mathcal{I}}

\newcommand{\B}{\mathcal{B}}

\newcommand*{\eqdef}{\stackrel{\mbox{\normalfont\tiny def}}{=}}
\newcommand{\abs}[1]{\lvert #1 \rvert}
\DeclareMathOperator{\disc}{disc}
\newcommand{\mcal}[1]{\mathcal{#1}}

\title{A finite victory over de Bruijn--Erd\H{o}s in interval discrepancy}
\author{%
Jared DeLeo\thanks{Department of Mathematics and Computer Science, LaSalle University, Philadelphia, PA, USA. \texttt{deleoj@lasalle.edu}}
\and
Owen Henderschedt\thanks{Department of Mathematics and Statistics, Auburn University, Auburn, AL, USA. \texttt{olh0011@auburn.edu}}
\and
Chris Wells\thanks{Department of Mathematics and Statistics, Haverford College, Haverford, PA, USA. \texttt{cwells@haverford.edu}.}}

\date{}

\begin{document}

\maketitle

\begin{abstract}
We study a finite form of the classical interval discrepancy problem. Starting from the unit interval, one repeatedly splits an existing interval into two until $n$ intervals have been produced. The discrepancy of such a process is the maximum, over all intermediate stages, of the ratio between the longest interval and the shortest interval. A theorem of de Bruijn and Erd\H{o}s from 1949 shows that this ratio must approach $2$ as $n\to\infty$, and they give a sharp construction achieving this bound. For fixed $n$, their construction gives the upper bound $\disc(n)\leq 2-\frac{3}{2n}+O(1/n^2)$. In this paper, we improve the first-order term of this bound. Specifically, we construct a strategy, called \emph{lex-merge}, with $\disc(n)\leq 2-\frac{4\ln 2}{n}+O(1/n^2)$. We prove also the lower bound $\disc(n)\geq 2-\frac{6\ln 2}{n}-O(1/n^2)$, showing that the first-order term in this improvement over the de Bruijn--Erd\H{o}s construction has the correct order of magnitude. We conjecture that the lex-merge strategy is optimal for every $n$.
\end{abstract}

\section{Introduction}

How should we place a set of points in a geometric domain so that the deviation from a uniform distribution is minimized?
This deviation is known as the \emph{discrepancy}, and the precise meaning of ``deviation from uniformity'' depends on the setting.
For example, if the domain is the unit interval $[0,1]$ and we wish to place a fixed finite number of points, it is natural to expect that an arrangement minimizing discrepancy is obtained by placing the points at positions ${i\over n+1}$ for $i\in\{1,\dots,n\}$.
Quoting Ji\v{r}\'i Matou\v{s}ek~\cite{GeoDiscrepancy}: ``[Such an arrangement] hardly finds serious competitors as a candidate for the most uniformly distributed $n$-point set in the unit interval''.
Of course, the problem is no longer so trivial when the domain is more complicated, when the number of points is infinite, or when the points must be chosen sequentially; such problems give rise to the area of \emph{geometric discrepancy theory} (see the book~\cite{GeoDiscrepancy}).

A classical way to measure how evenly points divide an interval or circle is to compare the longest gap with the shortest gap. For a fixed number of points in $[0,1]$, the completely balanced configuration is again optimal, since all resulting subintervals have equal length. The situation changes dramatically when the points are placed one at a time and one asks how well-balanced the gaps can remain throughout the entire process.

This sequential problem has a classical history. In 1949, de Bruijn and Erd\H{o}s~\cite{deb} studied infinite sequences of points placed one at a time on the circle $S^1$. They proved that, for every such sequence, the ratio of the largest gap to the smallest gap must be arbitrarily close to $2$ infinitely often. Moreover, this is sharp: they constructed an extremal sequence, given by
\[
    x_k=\log_2(2k-1)\pmod 1,
\]
showing that the asymptotic constant $2$ cannot be improved. After the first point is placed on the circle, one may cut the circle at that point and unwrap it into the unit interval; the gaps on the circle then become the intervals of a partition of $[0,1]$. Thus, in studying the ratio between the longest and shortest subintervals, the circle and interval formulations are equivalent.

This raises a natural question: although the de Bruijn--Erd\H{o}s construction is asymptotically sharp, can it be improved when we only care about the process up to a fixed number of intervals? More precisely, given a positive integer $n$, we ask for the best possible strategy up to the moment when $n$ intervals have been produced. We formalize this now.

Instead of considering the points themselves, it will be convenient to work directly with the induced interval partitions. For a partition $\mcal I$ of $[0,1]$ into intervals, define
\[
    \disc(\mcal I)\eqdef\max_{I_1,I_2\in\mcal I}{\abs{I_1}\over\abs{I_2}},
\]
where $\abs{I}$ denotes the length of the interval $I$.

A \emph{strategy of length $n$} is a sequence $(\mcal I_1,\mcal I_2,\dots,\mcal I_n)$ such that $\mcal I_i$ is a partition of $[0,1]$ into $i$ intervals and $\mcal I_{i+1}$ is formed from $\mcal I_i$ by dividing a single interval into two. For a strategy $\mcal S=(\mcal I_1,\mcal I_2,\ldots,\mcal I_n)$, define
\[
    \disc(\mcal S)\eqdef\max_{t\in[n]}\ \disc(\mcal I_t).
\]
Finally, define
\[
    \disc(n)\eqdef\min_{\mcal S}\ \disc(\mcal S),
\]
where the minimum is taken over all strategies $\mcal S$ of length $n$.

Observe that $\disc(1)=\disc(2)=1$.
Already, determining $\disc(3)$ is non-trivial, but can be found without too much effort.
First, notice that for any strategy $\mathcal{S}=(\mcal I_1,\mcal I_2,\mcal I_3)$ of length $3$ that realizes $\disc(3)$, we may assume $\mathcal{I}_3$ is obtained from $\mathcal{I}_2$ by splitting the largest interval of $\mathcal{I}_2$ exactly in half.
Thus, by employing symmetry, we may assume that there exists some $x\in [1/2,1]$ such that any strategy of length $3$ realizing $\disc(3)$ looks like
\[
    \bigl(\bigl\{[0,1]\bigr\}, \bigl\{[0,x], [x,1]\bigr\}, \bigl\{[0, x/2], [x/2, x], [x,1]\bigr\}\bigr).
\]
Therefore,
\[
    \disc(3) = \min_{x\in [1/2,1]}\max\biggl\{\frac{x}{1-x}, \frac{1-x}{x/2}\biggr\}.
\]
Basic calculus indicates that the minimum occurs at $x = 2-\sqrt2$; therefore $\disc(3) = \sqrt{2}\approx 1.4142$.

With this notation, the theorem of de Bruijn and Erd\H{o}s implies that
$\disc(n)\to 2$ as $n\to\infty$.  The base-$2$ logarithmic sequence of de Bruijn and Erd\H{o}s induces a strategy of length $n$, giving the following upper bound.

\begin{theorem}[de Bruijn--Erd\H{o}s~\cite{deb}]\label{thm:dBE-finite}
    For every positive integer $n$,
    \[
        \disc(n)
        \leq
        \frac{\log(1+1/n)}
        {\log\bigl((1-\frac1{2n})^{-1}\bigr)}
        =
        2-\frac{3}{2n}+O\biggl(\frac1{n^2}\biggr).
    \]
\end{theorem}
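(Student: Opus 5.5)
The plan is to exhibit one explicit strategy of length $n$, namely the one induced by the de Bruijn--Erd\H{o}s sequence $x_k=\log_2(2k-1)\pmod 1$. I will compute the gap structure at every stage, and then bound the discrepancy at each stage by the value at the last stage $m=n$. Since $x_1=0$, cutting the circle at $x_1$ identifies the circle with $[0,1]$, so the first $m$ points determine a partition $\mathcal I_m$ of $[0,1]$ into $m$ intervals. Adding $x_{m+1}$ splits exactly one interval, so $(\mathcal I_1,\dots,\mathcal I_n)$ is a strategy of length $n$.

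The key structural step is to identify the point set explicitly. Every integer $j\in\{m,m+1,\dots,2m-1\}$ is uniquely $2^a\cdot o$ with $o$ odd and $o\le 2m-1$. Conversely, each odd $o\le 2m-1$ has a unique power-of-two multiple in $[m,2m)$. There are exactly $m$ such odd numbers, namely $o=2k-1$ with $k\le m$, so this is a bijection. Hence
\[
\{x_1,\dots,x_m\}=\{\log_2 j \bmod 1 : m\le j\le 2m-1\},
\]
and after subtracting $\log_2 m$ (a rotation) these are the points $\log_2 j$ with $j\in[m,2m)$ on the circle $\mathbb R/\mathbb Z$, viewed as $[\log_2 m,\log_2 2m)$. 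The $m$ gaps are therefore
\[
\log_2\frac{j+1}{j}\quad (m\le j\le 2m-1),
\]
where the last gap, from $2m-1$ to $2m\equiv m$, is $\log_2\frac{2m}{2m-1}$. As a consistency check, passing from $m$ to $m+1$ replaces the integer $m$ by $2m$ and inserts $2m+1$. This splits the gap $[\log_2 m,\log_2(m+1)]\equiv[\log_2 2m,\log_2(2m+2)]$ into two pieces.

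Since $\log_2(1+1/j)$ is decreasing in $j$, the largest gap at stage $m$ is $\log_2\frac{m+1}{m}$ and the smallest is $\log_2\frac{2m}{2m-1}$. Thus
\[
\disc(\mathcal I_m)=f(m)\eqdef\frac{\log(1+1/m)}{\log\bigl((1-\frac1{2m})^{-1}\bigr)}.
\]
It remains to show that $f$ is nondecreasing in $m$, so that $\max_{m\le n}f(m)=f(n)$. I expect this monotonicity to be the main obstacle, mild though it is. I would set $t=1/m$ and study $g(t)=\log(1+t)/(-\log(1-t/2))$ on $(0,1]$, showing $g'(t)\le 0$. Writing $u=-\log(1-t/2)$, the sign condition becomes
\[
\frac{u}{1+t}\le\frac{\log(1+t)}{2-t}.
\]
I would verify this by comparing power series, or by checking that the difference vanishes at $t=0$ and has a derivative of constant sign. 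Small cases ($m=1,2$) can be checked by hand if needed.

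Finally, the asymptotic form follows by expansion. We have $\log(1+1/n)=\frac1n-\frac1{2n^2}+O(n^{-3})$ and $-\log(1-\frac1{2n})=\frac1{2n}+\frac1{8n^2}+O(n^{-3})$. Their ratio is
\[
2\Bigl(1-\frac1{2n}\Bigr)\Bigl(1-\frac1{4n}\Bigr)+O(n^{-2})=2-\frac3{2n}+O(n^{-2}).
\]
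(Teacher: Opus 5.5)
Your proposal is correct and follows the route the paper indicates: the paper gives no proof of its own, only citing de Bruijn--Erd\H{o}s and remarking that their sequence $\log_2(2k-1) \bmod 1$ induces a strategy of length $n$, and your bijection onto $\{\log_2 j : m\le j\le 2m-1\}$ supplies the gap computation it omits. The monotonicity step you flag is indeed mild: $H(t)=(1+t)\log(1+t)+(2-t)\log(1-t/2)$ satisfies $H(0)=0$ and $H'(t)=\log(1+t)-\log(1-t/2)>0$, which gives your inequality $u/(1+t)\le \log(1+t)/(2-t)$ on $(0,1]$.
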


Our first result gives a strategy of length $n$ whose first-order term improves that of de Bruijn and Erd\H{o}s.

\begin{theorem}\label{thm:lex-merge}
    For every positive integer $n$,
    \[
        \disc(n)\leq 2^{1-1/\lceil n/2\rceil}
        =
        2-\frac{4\ln 2}{n}+O\biggl(\frac1{n^2}\biggr).
    \]
\end{theorem}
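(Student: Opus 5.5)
The plan is to build the strategy backwards. A strategy of length $n$ is the same thing as a sequence of $n-1$ merges starting from a partition into $n$ intervals and ending at $[0,1]$. Since $\disc$ depends only on the multiset of lengths, I will work with multisets of lengths and rescale at the end so they sum to $1$. Set $m=\lceil n/2\rceil$, $\rho=2^{1/m}$ and $D=2^{1-1/m}=\rho^{m-1}$. The aim is to exhibit $n$ lengths, all powers of $\rho$, together with a merge order in which every intermediate multiset has max/min ratio at most $D$. Since $\disc(n)$ is clearly nondecreasing in $n$ (truncate a strategy) and the bound depends only on $m$, it suffices to treat $n=2m$.

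The first phase is clean. Start with two copies each of $\rho^0,\rho^1,\dots,\rho^{m-1}$, a multiset of ratio exactly $D$. Step $j$, for $j=0,\dots,m-1$, merges the two copies of $\rho^j$ into $2\rho^j=\rho^{j+m}$. After step $j$ the multiset is two copies each of $\rho^{j+1},\dots,\rho^{m-1}$ together with single copies of $\rho^m,\dots,\rho^{m+j}$. Its ratio is $\rho^{m+j}/\rho^{j+1}=\rho^{m-1}=D$, and after step $m-1$ (no doubled lengths remain) it is $\rho^{2m-1}/\rho^m=D$. So going from $2m$ intervals down to $m$ never exceeds $D$, and we arrive at a single geometric multiset $\{\rho^m,\dots,\rho^{2m-1}\}$, that is, $m$ intervals with consecutive ratio $2^{1/m}$.

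The second phase, taking $m$ intervals with geometric ratio $2^{1/m}$ down to one interval, is where I expect the real work, presumably the ``lex'' part of lex-merge. Two questions need settling here: which pair to merge, and whether the ratio stays at most $D$. Merging the two smallest gives $\rho^m(1+\rho)$. This exceeds the largest element $\rho^{2m-1}$ as soon as $1+\rho>\rho^{m-1}=D$, which does hold, but the new maximum over the new minimum $\rho^{m+2}$ is then $(1+\rho)/\rho^2<D$ for large $m$. So greedy ``merge the two smallest, then continue'' looks plausible. I would run the following induction. The multiset at each stage is the current smallest elements, all untouched original powers of $\rho$, together with a collection of merged blocks whose sizes are sums of consecutive powers and increase in a lexicographic pattern. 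The invariant to maintain is (largest block)/(smallest untouched element) $\le D$. At the end every element is a block, and one checks that blocks formed in the same ``round'' differ by a factor less than $2$.

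If the bookkeeping for greedy merges fails, my fallback is to recurse. Note that $\{\rho^m,\dots,\rho^{2m-1}\}$, read up to scale, is the starting multiset for the $\lceil m/2\rceil$-problem with exponent base $2^{1/m}$ rather than $2^{1/\lceil m/2\rceil}$. I would try to show by induction on $m$ a stronger statement: any $m$-term geometric multiset with ratio $r\le 2^{1/\lfloor\cdot\rfloor}$ can be merged to a single interval while keeping ratio at most $r^{m-1}$, pairing $\rho^{j}$ with $\rho^{j+\lceil m/2\rceil}$ in the first round so that each pair sum lies in a window of ratio at most $\rho^{\lfloor m/2\rfloor}$.

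Finally, the asymptotic form follows from $2^{1-1/m}=2\exp(-\ln 2/m)=2-\tfrac{2\ln 2}{m}+O(m^{-2})$ together with $m=n/2+O(1)$, which gives $2-\tfrac{4\ln 2}{n}+O(n^{-2})$.
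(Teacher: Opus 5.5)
Your reduction to $n=2m$ is valid: truncation shows $\disc(n)$ is nondecreasing, and the bound depends only on $m$. Your first phase is correct and coincides with the first $m$ merges of the paper's lex-merge strategy. Your greedy rule for the second phase is in fact the paper's rule. Lemma~\ref{lem:lex-implies-length} shows that lexicographic order on baskets agrees with length order, so lex-merge always merges the two shortest baskets.

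The gap is that the second phase is the whole difficulty, and you have only checked its first merge. Your sketch falls short in three ways:
\begin{itemize}
\item The invariant (largest block)/(smallest untouched element) $\le D$ says nothing once the untouched elements run out after the first round.
\item The remark that blocks formed in the same round differ by a factor less than $2$ is too weak, since the target is $2^{1-1/m}<2$.
\item You never treat stages where blocks of two consecutive rounds coexist, or rounds with an odd number of blocks.
\end{itemize}
In an odd round, the leftover block must be merged with the smallest block of the next round. For odd $m$, in your scaling, $\rho^{m-1}$ is merged with $1+\rho$, giving $1+\rho+\rho^{m-1}$. This block wraps around the exponent sequence cyclically. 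Thereafter the configuration contains an exceptional wrapped block whose composition depends on the binary expansion of $n$.

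The paper controls this in Lemmas~\ref{lem:structural-props} and~\ref{lem:wrapped-structure}. It proves that every block is a cyclic run $I_h(a)$, that all sizes are $2^r$ or $2^{r+1}$ apart from one wrapped block of known composition, and that these blocks form chains. In Lemma~\ref{lem:merge-monotonicity} it then verifies $\ell(B_1)\ell(M)\le \ell(B_3)\ell(B_t)$ at each merge where the new block $M=B_1\uplus B_2$ becomes the longest. The two cases involving the wrapped block reduce to nontrivial polynomial inequalities, namely $F(x,y)\ge 0$ and $G(X,z)\ge 0$. None of this appears in your sketch, so the theorem is not yet proved.

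Your fallback fails outright. In the scaled multiset $\{1,\rho,\dots,\rho^{m-1}\}$, whose ratio is already exactly $D$, the first merge is forced when $m\ge 3$:
\begin{itemize}
\item Merging two elements other than $1$ produces a block of length at least $\rho+\rho^2>2>D$ while $1$ is still present.
\item Merging $1$ with $\rho^k$ for $k\ge 2$ leaves $\rho$ present and gives ratio $(1+\rho^k)/\rho>2/\rho=D$.
\end{itemize}
Hence pairing $\rho^j$ with $\rho^{j+\lceil m/2\rceil}$ violates the bound at its very first step, for every $j$. The only admissible first move is your greedy merge $1+\rho$, so the real work cannot be avoided by a different first-round pairing.
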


Our second result shows that one cannot improve the asymptotic constant $2$ by more than order $1/n$. More precisely, the coefficient of $1/n$ in such an improvement is at most $6\ln 2$.

\begin{theorem}\label[theorem]{lower}
    For every positive integer $n$,
    \[
        \disc(n)\geq 2^{1-1/\lceil n/3\rceil}=2-\frac{6\ln 2}{n}-O\biggl(\frac1{n^2}\biggr).
    \]
\end{theorem}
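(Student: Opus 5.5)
The plan is to fix an optimal strategy $\mathcal S=(\mathcal I_1,\dots,\mathcal I_n)$ and write $r=\disc(\mathcal S)$. Since $\disc(n)$ is nondecreasing in $n$ (truncating a strategy does not increase its discrepancy), I will assume $n=3m$ with $m=\lceil n/3\rceil$; for smaller $n$ in the same residue block I will instead work with stages $m,\dots,n$, using that $n\ge 3m-2$. The target inequality $r\ge 2^{1-1/m}$ is equivalent to
\[
r^m\ge 2^{m-1}.
\]
I will obtain this by a telescoping chain of $m$ inequalities, $m-1$ of which each contribute a factor $2$ and one of which contributes a factor $1$.

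\textbf{Step 1 (a block of $m$ splits).} Look at the $m$ intervals present at stage $m$, and at the splits performed between stage $2m$ and stage $3m$. Every interval at stage $2m$ has length at most $r$ times the shortest, and the shortest has length at most $1/(2m)$. Among the $2m$ intervals present at stage $2m$, at least $m$ are unsplit descendants of splits made after stage $m$. I will choose an ordered chain of intervals $J_0\supsetneq J_1\supsetneq\cdots$, or a sequence $J_1,\dots,J_m$ of intervals alive at suitable stages $t_1<\dots<t_m$, so that:
\begin{itemize}
\item[(a)] $J_{i+1}$ is a half-or-smaller child of a split whose parent has length at least $|J_i|$, which gives $|J_{i+1}|\le |J_i|/2$;
\item[(b)] at the stage where $J_{i+1}$ exists, the interval $J_i$ (or an interval at least as long) is still alive, which gives $|J_i|\le r\,|J_{i+1}|$.
\end{itemize}

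\textbf{Step 2 (telescoping).} The chain is built greedily. At each stage, take the currently longest interval; the next split that cuts an interval of at least that length creates a child of at most half its length. Counting shows this happens at least $m-1$ times within the window $[m,3m]$, since there are $2m$ splits and each ``long'' interval must eventually be cut. Multiplying the inequalities along the chain, the factor $r$ appears $m$ times while the halvings appear $m-1$ times. The endpoints are compared using $1/m$ (largest at stage $m$) against the smallest at stage $3m$. Combining these gives $r^m\ge 2^{m-1}$.

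The main obstacle is Step 1: organizing the combinatorics so that exactly $m-1$ genuine halvings are forced inside a window of about $2m$ splits while the ratio bound is used only $m$ times. If the greedy chain fails, I would instead track $\ell_j$ and $L_j$, the shortest and longest lengths at stage $j$. I would use $L_{2j}\ge$ (a sum argument) together with $\ell_{j+1}\le L_j/2$ whenever the longest interval is split, and aim to prove $L_{k+1}\le$ previous $L$ relations whose product over $m$ rounds yields the bound.
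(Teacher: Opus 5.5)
\textbf{There is a genuine gap.} Your target $r^m\ge 2^{m-1}$ has the right shape, but Step 1, which you flag as the main obstacle, cannot be carried out as specified. Conditions (a) and (b) on a single link give $|J_i|\le r|J_{i+1}|\le r|J_i|/2$, i.e.\ $r\ge 2$. So for any strategy with $r<2$ (the only case that matters, since $r\ge 2$ makes the bound trivial; lex-merge is an example) not even one such link exists. The justification of (a) is also backwards: a parent of length \emph{at least} $|J_i|$ gives no upper bound on $|J_{i+1}|$ in terms of $|J_i|$.

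A working link has a different shape: $|J_{i+1}|\le r\cdot|c|\le r|J_i|/2$. Here $c$ is the smaller child of $J_i$ itself, and $J_{i+1}$ is a \emph{different} interval still present when $J_i$ is cut. You have conflated the child with $J_{i+1}$. Step 2 has two further problems. It rests on the unexplained claim that each long interval ``must eventually be cut''; having $2m$ splits does not force this, since a priori they could all subdivide a few intervals. And its endpoint comparison (largest at stage $m$ versus smallest at stage $3m$) compares different partitions, so it is not controlled by $r$.

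Two ideas are missing.
\begin{enumerate}
\item If $r<2$, every split must cut a currently longest interval; otherwise an interval of length $x_1$ coexists with a piece of length at most $x_1/2$. Write $x_1\ge\dots\ge x_k$ for the lengths of $I_1,\dots,I_k$ at stage $k=\lceil n/3\rceil$. Then when $I_i$ is cut, $I_{i+1}$ is still present, which gives $x_{i+1}\le r x_i/2$.
\item A pigeonhole argument shows $I_i$ really is cut by stage $n$ whenever $k+2i-1\le n$. Otherwise $I_i,\dots,I_k$ all survive, and the other at least $3i-2$ intervals at stage $n$ lie inside $I_1,\dots,I_{i-1}$. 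So some $I_t$ with $t<i$ is cut into at least four pieces, one of length at most $x_t/4<x_i/2$ (using $x_i>x_t/2$ at stage $k$). That piece and $I_i$ give ratio greater than $2$. This is where the $3$ in $\lceil n/3\rceil$ comes from.
\end{enumerate}
Chaining within the single stage $k$ then gives $r\ge x_1/x_k\ge(2/r)^{k-1}$, i.e.\ $r^k\ge 2^{k-1}$.

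Your opening reduction also runs the wrong way. Monotonicity gives $\disc(n)\ge\disc(3m-2)$, so the case to prove is $n=3m-2$. Proving the bound for $n=3m$ says nothing about $n=3m-1$ or $n=3m-2$.
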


This leaves the natural open problem of determining the correct constant in this first-order term. We believe that the upper bound in \Cref{thm:lex-merge} is sharp, and make the following precise conjecture.

\begin{conjecture}\label[conjecture]{conjvalue}
    For every positive integer $n$,
    \[
        \disc(n)=2^{1-1/\lceil n/2\rceil}.
    \]
\end{conjecture}

This problem and its variants have also appeared under the terminology of \emph{balanced stick breaking}. We take a brief moment to discuss a few of these variants. In their original paper, de Bruijn and Erd\H{o}s~\cite{deb} considered a generalization in which one compares the largest total length of $r$ consecutive subintervals with the smallest total length of $r$ consecutive subintervals. Recent progress on this problem was made by Clément and Steinerberger~\cite{clement2025balancedstickbreaking}, who showed that this ratio, in the limit, is at most $1+O(\log r/r)$. Various studies have also explored the optimal spacing of points on a circle, such as angles between close-to-evenly distributed points (Ravenstein, \cite{vanRavenstein1989OptimalSpacing}) and number of points within a finite distance along the circle (Ramshaw, \cite{Ramshaw1978GapStructure}). A survey of balanced stick breaking, related distributions of points on a circle, and analysis on interval sizes is provided by Habib \cite{Habib2014EquitableDistribution}.

We now outline the rest of the paper. In \cref{sec:lower}, we prove \cref{lower}. In \cref{sec:lex-merge}, we introduce the strategy used to prove \cref{thm:lex-merge}, which we call \emph{lex-merge}. In \cref{sec:structure}, we establish several structural properties of lex-merge. Then, in \cref{sec:upper}, we use these properties to prove \cref{thm:lex-merge}. Finally in \cref{sec:conclusion} we conclude with some open problems.

\paragraph{Acknowledgments.}
We thank Joseph Briggs for introducing us to this problem and for helpful discussions throughout our work on it.

\section{Proof of \Cref{lower}}\label{sec:lower}
Suppose that $\mcal S=(\mcal I_1,\dots,\mcal I_n)$ is a strategy with $\disc(\mcal S)<2$.
Fix $k\in[n-1]$ and suppose that $\mcal I_k$ consists of intervals of lengths $x_1\geq x_2\geq\dots\geq x_k$.

We begin by noting that when moving to $\mcal I_{k+1}$, we must divide the interval of length $x_1$.
Indeed, if this were not the case, then $\mcal I_{k+1}$ would contain an interval of length $x_1$ and an interval of length at most ${x_1\over 2}$ and so $\disc(\mcal S)\geq 2$.

Thus, when moving to $\mcal I_{k+1}$ we divide the interval of length $x_1$ into intervals of lengths $x_1'\geq x_1''$ where $x_1'+x_1''=x_1$.
Observe that $x_1''<x_k$ for otherwise $\disc(\mcal S)\geq{x_1\over x_k}\geq {x_1\over x_1''}\geq 2$.

\begin{lemma}\label[lemma]{almostEqual}
    Suppose that $\mcal S=(\mcal I_1,\dots,\mcal I_n)$ is a strategy with $\disc(\mcal S)\leq 2^{1-\varepsilon}$ for some $\varepsilon>0$.
    Fix integers $i<k<n$ and suppose that $\mcal I_k$ consists of intervals of lengths $x_1\geq x_2\geq\dots\geq x_k$.
    If $k+2i-1\leq n$, then
    \[
        {x_i\over x_{i+1}}\geq 2^{\varepsilon}.
    \]
\end{lemma}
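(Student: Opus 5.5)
The plan is to argue by contradiction. Assume $k+2i-1\leq n$ and $x_i<2^{\varepsilon}x_{i+1}$. Then follow the process through the next $2i-1$ splits, from $\mcal I_k$ to $\mcal I_{k+2i-1}$. The goal is to reach some stage whose partition has ratio $>2^{1-\varepsilon}$.

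The basic tool is the observation made before the lemma. At every step the current longest interval must be split, and its smaller piece is strictly shorter than the current minimum. So the minimum strictly decreases at every step. Also, since $\disc(\mcal S)\leq 2^{1-\varepsilon}$, the minimum must stay above (current maximum)$/2^{1-\varepsilon}$.

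First I split the $k$ intervals of $\mcal I_k$ into the \emph{top block} $x_1,\dots,x_i$ and the \emph{bottom block} $x_{i+1},\dots,x_k$. I call an interval a descendant of the top block if it arises by repeatedly splitting one of $x_1,\dots,x_i$.

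\textbf{Step 1.} I will show that a single top interval cannot support three splits within the window before $x_{i+1}$ itself is split. Concretely, each top interval $x_j$, together with its descendants, can absorb at most two of the forced splits (and, I expect, $x_j$ alone gives rise to at most $2$ splits among $2i-1$ steps because of the parity/count). The reason is as follows. Once $x_j$ is split, its larger child has length at most $x_j-m$, where $m$ is the current minimum. A second split of that child again cuts off a piece shorter than the minimum. The remaining piece is then at most $x_j-2m'$. Since $x_j\leq x_1\leq 2^{1-\varepsilon}x_k$ and the minimum cannot drop below $x_{i+1}2^{\varepsilon-1}$ while $x_{i+1}$ is unsplit, this remaining piece must already be shorter than $x_{i+1}$. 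So it cannot be the largest interval again while $x_{i+1}$ survives.

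\textbf{Step 2.} Counting then shows the following. Within the $2i-1$ available steps, if $x_{i+1}$ is never split, the top block would have to absorb all $2i-1$ splits. With the bound from Step 1 this forces some top descendant to be split while being at most $x_{i+1}$. That is impossible, since that descendant would not be the largest interval while $x_{i+1}$ is unsplit. Hence $x_{i+1}$, or an interval of length $\geq x_{i+1}$ from the bottom block, is split at some step $t\leq k+2i-1$.

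\textbf{Step 3.} At that step $t$ a piece of length $\leq x_{i+1}/2$ is created. To finish, I must exhibit, in $\mcal I_t$, an interval of length $>2^{-\varepsilon}x_{i+1}$. Its ratio to that small piece is then $>2^{1-\varepsilon}$, which is the desired contradiction.

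The natural candidate is a top descendant that has been split only once, namely a larger child of some $x_j$ with $j\leq i$. This is where the hypothesis $x_i<2^{\varepsilon}x_{i+1}$ must enter. The counting in Step 2 (only $2i-1$ steps, not $2i$) should leave at least one top interval either unsplit, so of length $\geq x_i\geq x_{i+1}$, or else split so that its larger child is large.

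I expect Step 3 to be the main obstacle. The bound "larger child $\geq x_j/2$" alone only gives $x_i/2$, which is too weak. I would first try to show, via the pigeonhole count of $2i-1$ splits over $i$ top intervals, that some top interval is still unsplit at time $t$. That would immediately give an interval of length $\geq x_i\geq x_{i+1}>2^{-\varepsilon}x_{i+1}$. If this fails, I would refine the bookkeeping to carry along the decreasing minimum $m$ explicitly, and use $x_j-m$ rather than $x_j/2$ as the lower estimate for the larger child.
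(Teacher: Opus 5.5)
There is a genuine gap, and it comes from tracking the wrong interval.

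\textbf{Step 2 fails by exactly one.} While $I_{i+1}$ is unsplit, each of the $i$ top intervals can absorb up to two splits. So the top block has capacity $2i$, which is at least the $2i-1$ steps available. Nothing forces $I_{i+1}$, or any bottom interval, to be split inside the window. For small $\varepsilon$ it is consistent with every constraint you use that $I_1,\dots,I_{i-1}$ are each split twice and $I_i$ once.

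\textbf{Step 3 is misdirected.} Your primary hope is that some top interval is still unsplit when $I_{i+1}$ is split. No counting argument can give this. The interval being split must be a longest one, so at that moment every top interval longer than $x_{i+1}$ has necessarily already been split.

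\textbf{A smaller slip in Step 1.} The inequality is reversed. The smaller child is $<m$, so the larger child is $>x_j-m$, not $\leq x_j-m$. The upper bound you want comes instead from the smaller child being at least $2^{\varepsilon-1}$ times the current maximum, which is $\geq x_{i+1}$. With that correction, the conclusion of Step 1 does hold.

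\textbf{The missing idea: watch the first split of $I_i$, not of $I_{i+1}$.}
\begin{itemize}
\item No interval $I_j$ with $j>i$ can be split while $I_i$ is present. Ties $x_j=x_i$ would immediately give ratio $\geq 2$.
\item So when $I_i$ is first split, $I_{i+1}$ is still intact, and the smaller piece of $I_i$ has length at most $x_i/2$.
\item Hence $2^{1-\varepsilon}\geq x_{i+1}/(x_i/2)$, which is exactly the claim. You need no contradiction hypothesis and no Step 3.
\end{itemize}

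What remains is to show that $I_i$ is split at all by time $n$. Here the count does close, because only the $i-1$ intervals $I_1,\dots,I_{i-1}$ lie above $I_i$.
\begin{itemize}
\item While $I_i$ survives, none of them can be cut into $4$ or more pieces. A piece of length $\leq x_t/4$, together with $x_i>x_t/2$, would give ratio $>2$.
\item So these intervals absorb at most $2(i-1)=2i-2<2i-1\leq n-k$ splits, and $I_i$ must be split.
\end{itemize}
This is the paper's argument. Your capacity-two observation is essentially right, but it must be applied to the $i-1$ intervals above $I_i$, not to the $i$ intervals above $I_{i+1}$.
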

\begin{proof}
    Suppose that $\mcal I_k=\{I_1,\dots,I_k\}$ where $\abs{I_j}=x_j$.
    We begin by showing that $I_i\notin\mcal I_n$.

    Suppose for the sake of contradiction that $I_i\in\mcal I_n$; this implies that $I_j\in\mcal I_n$ for every $j\in\{i+1,\dots,k\}$ as well since we must always split the largest interval.
    This means that $\mcal I_n$ can be described as $I_i,\dots,I_k$ together with disjoint sub-intervals of $I_1,\dots,I_{i-1}$.
    For each $t\in[i-1]$, let $I_{(t,1)},I_{(t,2)},\dots,I_{(t,n_t)}$ denote these sub-intervals of $I_t$.
    Naturally, $x_t=\sum_{j=1}^{n_t}\abs{I_{(t,j)}}$ for each $t\in[i-1]$ and also $n=k-i+1+\sum_{t=1}^{i-1}n_t$.

    Since $n\geq k+2i-1$ we find that $\sum_{t=1}^{i-1}n_t\geq 3i-2>3(i-1)$ and so there must be some $t\in[i-1]$ for which $n_t\geq 4$.
    Let $y_j=\abs{I_{(t,j)}}$ for each $j\in[n_t]$ and suppose that $y_1\geq\dots\geq y_{n_t}$.
    Naturally, $y_{n_t}\leq x_t/n_t\leq x_t/4$.
    Recalling that $2>\disc(\mcal I_k)\geq{x_t\over x_i}\implies x_i>x_t/2$, we conclude that
    \[
        \disc(\mcal I_n)\geq{x_i\over y_{n_t}}>{x_t/2\over x_t/4}=2;
    \]
    a contradiction.
    \medskip

    Now that we know $I_i\notin\mcal I_n$, there must exist some $k<t\leq n$ for which $I_i\in\mcal I_{t-1}$ yet $I_i\notin\mcal I_t$; note that $I_{i+1}\in\mcal I_t$ still.
    Suppose that we divide $I_i$ into intervals of lengths $x_i'\geq x_i''$ with $x_i'+x_i''=x_i$ at this time.
    Then
    \[
        2^{1-\varepsilon}\geq\disc(\mcal I_{t})\geq{x_{i+1}\over x_i''}\geq {x_{i+1}\over x_i/2}\implies {x_i\over x_{i+1}}\geq 2^{\varepsilon}.\qedhere
    \]
\end{proof}

We are now ready to prove \Cref{lower}.

\begin{proof}[Proof of \Cref{lower}]
    Let $\mcal S=(\mcal I_1,\dots,\mcal I_n)$ be any strategy with $\disc(\mcal S)=2^{1-\varepsilon}$ and fix $k=\lceil n/3\rceil$.
    Suppose that $\mcal I_k$ consists of intervals of lengths $x_1\geq\dots\geq x_k$.
    For any $i\in[k-1]$, we have $k+2i-1\leq 3(k-1)\leq n$ and so \Cref{almostEqual} tells us that ${x_i\over x_{i+1}}\geq 2^{\varepsilon}$.
    Therefore,
    \[
        2^{1-\varepsilon}\geq\disc(\mcal I_k)={x_1\over x_k}={x_1\over x_2}{x_2\over x_3}\cdots{x_{k-1}\over x_k}\geq2^{(k-1)\varepsilon}.
    \]
    In particular, $\varepsilon\leq{1\over k}={1\over \lceil n/3\rceil}$ as claimed.
\end{proof}

\section{The lex-merge strategy}\label{sec:lex-merge}
Instead of describing a strategy for starting with an interval and splitting it into $n$, we start with $n$ intervals and describe a strategy for merging them together into one.
These two viewpoints are equivalent upon simply reversing the process.
\medskip

Fix $n\geq 1$ and set
\[
m\eqdef \left\lceil \frac n2\right\rceil.
\]

A \textit{basket} $B$ is a multiset on $\{0,1,\dots,m-1\}$ such that the multiplicity of each element is at most $2$.

Given a basket $B$, the \textit{length} of $B$ is defined to be
\[
\ell(B)\eqdef\sum_{x\in B} 2^{x/m}.
\]

Let $\mathcal{B}$ be a collection of baskets. The \textit{discrepancy} of $\mathcal{B}$ is
\[
\disc(\mathcal{B})=\frac{\max\{\ell(B):B\in\mathcal{B}\}}{\min\{\ell(B):B\in\mathcal{B}\}}.
\]

When comparing baskets lexicographically, we always write their elements in weakly increasing order. Given baskets $B_1$ and $B_2$, we say that $B_1$ is \textit{less than $B_2$ lexicographically}, and write $B_1\prec B_2$, if either
\begin{enumerate}
    \item[(i)] $|B_1|<|B_2|$, or
    \item[(ii)] $|B_1|=|B_2|$ and $B_1$ precedes $B_2$ in the usual lexicographic ordering.
\end{enumerate}

\textbf{Notation.} We use $[x_1,x_2,\dots, x_t]$ to denote the basket containing the elements $x_1,\dots,x_t$. In this way, braces $\{\ \}$ denote a collection of baskets, while brackets $[\ ]$ denote a single basket.

We recursively define a sequence of collections of baskets
\[
\mathcal{B}_0,\mathcal{B}_1,\dots,\mathcal{B}_{n-1}.
\]
First, let
\[
\mathcal{B}_0=
\begin{cases}
\{[0],[0],[1],[1],\dots,[m-2],[m-2],[m-1]\}, & \text{if $n$ is odd},\\[0.3em]
\{[0],[0],[1],[1],\dots,[m-2],[m-2],[m-1],[m-1]\}, & \text{if $n$ is even}.
\end{cases}
\]

Now for the recursion. Suppose
\[
\mathcal{B}_i=\{B_1,\dots,B_t\},
\]
where
\[
B_1\preceq B_2\preceq \cdots \preceq B_t,
\]
with ties (i.e., identical baskets) broken arbitrarily.
We obtain $\mathcal{B}_{i+1}$ by merging $B_1$ and $B_2$, that is, by removing $B_1$ and $B_2$ and replacing them with their multiset union
$M\eqdef B_1\uplus B_2.$
Thus,
\[
\mathcal{B}_{i+1}=\{B_3,\dots,B_t,M\}.
\]
We then reorder the baskets of $\mathcal{B}_{i+1}$ so that they are again listed increasingly with respect to $\preceq$.

We call this algorithm generating $\mathcal{B}_0,\dots,\mathcal{B}_{n-1}$ the \textit{lex-merge strategy of order $n$}, and denote it by $\mathrm{LM}_n$.
For example $\mathrm{LM}_7$ looks like
\begin{align*}
    \mathcal B_0 &= \{[0],[0],[1],[1],[2],[2],[3]\},\\
    \mathcal B_1 &= \{[1],[1],[2],[2],[3],[0,0]\},\\
    \mathcal B_2 &= \{[2],[2],[3],[0,0],[1,1]\},\\
    \mathcal B_3 &= \{[3],[0,0],[1,1],[2,2]\},\\
    \mathcal B_4 &= \{[1,1],[2,2],[0,0,3]\},\\
    \mathcal B_5 &= \{[0,0,3],[1,1,2,2]\},\\
    \mathcal B_6 &= \{[0,0,1,1,2,2,3]\}.
\end{align*}
Let
\[
\disc(\mathrm{LM}_n)=\max\{\disc(\mathcal{B}_i): i\in\{0,1,\dots,n-1\}\}.
\]

We make two remarks about this strategy. First,
\[
    L \eqdef \sum_{B\in \mathcal{B}_i}\ell(B)
\]
is independent of \(i\), but need not equal \(1\). Therefore, to obtain a strategy \(\mathcal{S}\) on the unit interval (or $S^1$), we normalize each basket length by \(L\). Since discrepancy is invariant under scaling, this normalization does not change any of the ratios.

Second, as mentioned at the beginning of this section, lex-merge is described as a merging process instead of as a splitting process. This causes no difficulty: the desired strategy is obtained by running lex-merge in reverse. Thus, for each \(1\leq i\leq n-1\), the transition from \(\mathcal{B}_i\) to \(\mathcal{B}_{i-1}\) corresponds to splitting a single interval into two.

\subsection{Structural properties of the lex-merge strategy}\label{sec:structure}

We now prove several structural properties about the baskets that arise in the lex-merge strategy of order $n$.
For integers $a,h$ define the cyclic interval of length $h$ starting at $a$ to be
\[
    I_h(a)\eqdef [a,a,a+1,a+1,\dots,a+h-1,a+h-1],
\]
where
\begin{enumerate}
    \item The elements are taken modulo $m$, and
    \item The element $m-1$ appears only once if $n$ is odd. That is, for example, if $n=9$ then $I_4(2)=[2,2,3,3,4,0,0]$, whereas if $n=10$ then $I_4(2)=[2,2,3,3,4,4,0,0]$.
\end{enumerate}
To make this definition more transparent, note that
\[
    |I_h(a)|=\begin{cases}
        2h, & \text{if $a+h<m-1$ or $n$ is even},\\
        2h-1, & \text{if $a+h\geq m-1$ and $n$ is odd}.
    \end{cases}
\]

We say a basket $B$ is \textit{cyclically ordered} if either $|B|=1$ or $B=I_h(a)$ for some $a,h$.
If $B$ contains both $0$ and $m-1$, we say $B$ is \textit{wrapped}.
We say a collection of baskets $\mathcal{B}$ has \textit{size-3-symmetry} if there exists integers $r,w$ such that $2^r < w < 2^{r+1}$ and all baskets $B\in \mathcal{B}$ have size $2^r$ or $2^{r+1}$ except possibly one wrapped basket of size $w$.

Given a collection of baskets $\mathcal{B}$, we say $\mathcal{B}'\subseteq \mathcal{B}$ is a \textit{chain} if there exists an ordering $B_1, \dots B_t$ of $\mathcal{B}'$ such that $B_i=I_{h_i}(a_i)$ where $a_{i+1}=a_i+h_i$.

\begin{figure}[h!]
    \centering
    \begin{tikzpicture}

\def\R{80pt}
\def\O{95pt}
\def\I{65pt}

\draw[fill=white] (0,0) circle (\R);

\newcommand{\annulussector}[3]{%
    \path[
        fill=#1,
        fill opacity=0.3,
        draw=none
    ]
    (#2:\O)
    arc (#2:#3:\O)
    -- (#3:\I)
    arc (#3:#2:\I)
    -- cycle;
}

\annulussector{red}{50}{125}
\annulussector{blue}{49}{-9}
\annulussector{blue}{-10}{-70}
\annulussector{blue}{-71}{-141}
\annulussector{green!60!black}{126}{156}
\annulussector{green!60!black}{157}{187}
\annulussector{green!60!black}{188}{218}

\node[fill=red!30, inner sep=2pt, outer sep=0pt] at (90-10:\R) {$[0]$};
\node[fill=red!30, inner sep=2pt, outer sep=0pt] at (90-20:\R) {$[0]$};
\node[fill=red!30, inner sep=2pt, outer sep=0pt] at (90+10:\R) {$[m-1]$};

\foreach \a in {87+28,87+31,87+34} {
    \fill[red!30] (\a:\R) circle (3pt);
    \fill[black] (\a:\R) circle (.75pt);
}

\foreach \a in {90-34,90-31,90-28} {
    \fill[red!30] (\a:\R) circle (3pt);
    \fill[black] (\a:\R) circle (.75pt);
}

\def\B{105pt}

\draw[thick] (-10:\B) arc (-10:-70:\B);

\draw[thick] (-10:102pt) -- (-10:108pt);
\draw[thick] (-70:102pt) -- (-70:108pt);

\node[fill=white, inner sep=2pt] at (-40:120pt) {$2^{r+1}$-basket};

\draw[thick] (157:\B) arc (157:187:\B);

\draw[thick] (157:102pt) -- (157:108pt);
\draw[thick] (187:102pt) -- (187:108pt);

\node[fill=white, inner sep=2pt] at (172:120pt) {$2^{r}$-basket};

\draw[thick] (50:\B) arc (50:125:\B);

\draw[thick] (50:102pt) -- (50:108pt);
\draw[thick] (125:102pt) -- (125:108pt);

\node[fill=white, inner sep=2pt] at (87.5:113pt) {Wrapped basket};

\node at (0,0) {\LARGE{$\mathcal{B}_i$}};

\end{tikzpicture}
    \caption{A visual of properties (P1)-(P3) in Lemma \ref{lem:structural-props} for the collection $\mathcal{B}_i$}
    \label{fig:properties}
\end{figure}

\begin{lemma}\label{lem:structural-props}
Fix $n$, and let $0\leq i\leq n$. Let $\mathcal{B}_i$ be the $i$th collection of baskets in the lex-merge strategy of order $n$. Then the following properties hold.

\begin{enumerate}
    \item[(P1)] Every basket in $\mathcal{B}_i$ is cyclically ordered.
    \item[(P2)] The collection $\mathcal{B}_i$ has size-3-symmetry.
    \item[(P3)] If $\mathcal B'$ and $\mathcal B''$ are the collections of baskets of size $2^r$ and $2^{r+1}$, respectively, then both $\mathcal B'$ and $\mathcal B''$ form chains. Furthermore, $(\mathcal B'', \mathcal B')$ (in this order) also forms a chain.
\end{enumerate}
\end{lemma}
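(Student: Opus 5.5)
We argue by induction on $i$, proving (P1)--(P3) simultaneously together with one extra invariant that makes the induction run. The extra invariant is that the chain $(\mathcal B'',\mathcal B')$ is \emph{positioned}: the $2^r$-baskets come first cyclically, starting right after the wrapped basket if there is one, and the $2^{r+1}$-baskets follow them, ending at the wrapped basket. Concretely, reading $\{0,\dots,m-1\}$ cyclically starting from the end of the wrapped basket (or from $0$ if there is none), we meet the $2^r$-baskets in increasing lexicographic order, then the $2^{r+1}$-baskets in increasing lexicographic order. Then the wrapped basket, which is lexicographically largest among its size class since it contains $0$ but is listed as $[0,\dots]$, needs care. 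So I would first fix the convention carefully, using how $\prec$ compares a wrapped basket written in increasing order.

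\textbf{Base case.} For $i=0$, every basket is a singleton $[j]$, so (P1) holds. All sizes equal $1=2^0$, so (P2) holds with $r=0$ and no wrapped basket. For (P3), the baskets $[0],[0],[1],[1],\dots$ pair up into $I_1(j)$-type steps, and the chain condition holds in the "doubled" sense. Here I would note that the definition of chain should be read with singletons $[j],[j]$ together covering $I_1(j)$; I will treat the pair of equal singletons as a half-step and verify the base case directly.

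\textbf{Inductive step.} Assume the invariants for $\mathcal B_i$. Since $\prec$ first compares sizes, the two smallest baskets $B_1,B_2$ are the lexicographically first two among the smallest-size class. There are three cases.
\begin{enumerate}
\item[(a)] There are at least two $2^r$-baskets. By the positioning invariant these are the first two consecutive baskets of the chain $\mathcal B'$, i.e.\ $I_h(a)$ and $I_h(a+h)$. Their union is $I_{2h}(a)$, so (P1) holds. It has size $2^{r+1}$ unless it crosses $m-1$ for odd $n$, in which case it becomes the wrapped basket of size $2^{r+1}-1$. It becomes the last element of $\mathcal B''$ cyclically, so (P3) is preserved.
\item[(b)] There is exactly one $2^r$-basket and it is not wrapped. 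If a basket of size $w$ exists, we merge with whichever of the wrapped basket and the first $2^{r+1}$-basket is $\prec$-smaller. Since $w<2^{r+1}$, the wrapped basket is merged. The positioning invariant says the lone $2^r$-basket is adjacent to the wrapped basket, so the union is again an interval and is wrapped, of size $2^r+w$, which lies strictly between $2^{r+1}$ and $2^{r+2}$. Now only $2^{r+1}$-baskets remain besides it, so $r$ increments.
\item[(c)] There are no $2^r$-baskets. Then $r$ increases by one, and the same argument applies.
\end{enumerate}
In each case the new collection again satisfies size-3-symmetry, since all other sizes are untouched.

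\textbf{Main obstacle.} The hardest part is verifying that lexicographic order agrees with cyclic position, so that the two $\prec$-smallest baskets are really adjacent on the cycle. This is exactly what makes the union an interval $I_{h}(a)$. The trouble is that a wrapped basket written increasingly begins with $0$, so it is lexicographically small within its size class, not large. I would handle this by proving the stronger statement that the wrapped basket always has size strictly different from every other basket except at the moment it is created. Its size $w$ then puts it alone in its $\prec$-class, and we only need to compare it against baskets of the same size when $n$ is odd and it has size $2^{r+1}-1$. That case needs a separate check that it is merged with the correct neighbor, namely the $2^r$-basket starting where it ends. I expect to carry this out by tracking the starting point $a$ of the first $2^r$-basket, showing it advances monotonically around the cycle, and comparing first elements.
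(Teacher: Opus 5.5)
Your overall plan is the paper's: a simultaneous induction on $i$ with a positional strengthening of (P3), split by the sizes of the two $\prec$-smallest baskets. But the positional invariant you state is reversed, and this is not cosmetic. Read cyclically from the end of the wrapped basket $W$ (or from $0$ if there is none). One meets the $2^{r+1}$-baskets \emph{first}, and only then the $2^r$-baskets, the last of which ends immediately before $W$ (or at $m-1$). This is exactly what ``$(\mathcal B'',\mathcal B')$ in this order'' in (P3) encodes.

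Your version already fails for $\mathcal B_3=\{[3],[0,0],[1,1],[2,2]\}$ in $\mathrm{LM}_7$. It is also not preserved by your own case (a). If the $2^r$-baskets sat right after $W$, merging the first two would place a $2^{r+1}$-basket in front of the remaining $2^r$-baskets, splitting $\mathcal B''$ into non-adjacent pieces. Your claim that the merged basket ``becomes the last element of $\mathcal B''$'' is true only in the correct orientation. Once the orientation is fixed, your ``main obstacle'' is easy: the combined chain never crosses the $m-1\to 0$ boundary, so within each size class $\prec$ is just the order of starting points along the chain.

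The orientation is decisive in a case you omit entirely: exactly one $2^r$-basket and no wrapped basket (your case (b) assumes $W$ exists). Here $B_1$ is the lone $2^r$-basket and $B_2$ is the first $2^{r+1}$-basket. In the correct picture $B_1$ is the last block of the chain, ending at $m-1$, and $B_2$ is the first, starting at $0$, so $B_1\uplus B_2$ is wrapped of size $3\cdot 2^r$. This is how the exceptional basket is born, and why (P2) may insist that it be wrapped. Under your positioning, the lone $2^r$-basket would start at $0$, and the union would in general be an unwrapped basket of size $3\cdot 2^r$, violating (P2).

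Your case (c) also fails when $W$ exists. With no $2^r$-baskets you cannot simply raise $r$, because $w<2^{r+1}$ would then lie below the new base size. The actual merge is $W\uplus B_2$ with $B_2$ the first $2^{r+1}$-basket. You need, again from the correct positioning, that $B_2$ begins exactly where $W$ ends; this gives a wrapped basket of size $w+2^{r+1}\in(2^{r+1},2^{r+2})$. These two missing or mishandled cases, $(2^r,2^{r+1})$ and $(w,2^{r+1})$, are half of the paper's case analysis.

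Conversely, the odd-$n$ scenarios you flag never arise. The singleton $[m-1]$ is never merged with another singleton: it becomes the lone size-$1$ basket and merges with $[0,0]$ to create the first wrapped basket, after which the single copy of $m-1$ stays inside $W$. And since $2^r<w<2^{r+1}$, $W$ is always alone in its size class, so no tie-breaking against it is ever needed.
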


\begin{proof}
    We prove this by induction on $i$. By the definition of $\mathcal{B}_0$, the first $m$ merges consist of merging $[0],[0]\to [0,0]$, then $[1],[1]\to [1,1]$, through $[m-1],[m-1]\to [m-1,m-1]$ (so long as $n$ is even). All of these collections trivially satisfy (P1)-(P3). We may now assume that if $i\in B$, there are both copies of $i$ in $B$ for all $0\leq i\leq m-2$ (and $m-1$ if $n$ is even). We assume the statement holds through $\mathcal{B}_i$, and we now show that $\mathcal{B}_{i+1}$ satisfies the desired properties. Let 
    \[
        \mathcal{B}_i = \{B^{(i)}_1,\dots , B_t^{(i)}\} \qquad \text{where} \qquad B^{(i)}_1\prec \dots \prec B^{(i)}_t,
    \] 
    and let 
    \[
        \mathcal{B}_{i+1} = \{B^{(i+1)}_1,\dots , B_{t-1}^{(i+1)}\} \qquad \text{where} \qquad B^{(i+1)}_1\prec \dots \prec B^{(i+1)}_{t-1}.
    \] 
    Thus, $B^{(i+1)}_j=B^{(i)}_{j+2}$ for all $j\in [t-2]$, and $B^{(i+1)}_{t-1} = B^{(i)}_1 \uplus B_2^{(i)}$. Let $M = B^{(i)}_1 \uplus B_2^{(i)}$ be the merged basket to go from $\mathcal{B}_i$ to $\mathcal{B}_{i+1}$.

    \textbf{Proof that $\mathcal{B}_{i+1}$ satisfies (P1).} Every basket of $\mathcal{B}_{i+1}$ except $M$ already belongs to $\mathcal{B}_i$, and hence is cyclically ordered by the induction hypothesis. Thus, it remains only to show that $M$ is cyclically ordered.

    First, suppose that $|B^{(i)}_1|=|B^{(i)}_2|$. Since the lex-merge strategy merges the two lexicographically smallest baskets, these two baskets have minimum size among all baskets in $\mathcal{B}_i$. By the size-3-symmetry property, and by choosing $r$ so that $2^r$ is the minimum basket size appearing in $\mathcal{B}_i$, we may assume that
    $|B^{(i)}_1|=|B^{(i)}_2|=2^r$. By (P3), the baskets of size $2^r$ form a chain. Since $B^{(i)}_1$ and $B^{(i)}_2$ are the two lexicographically smallest baskets, they are consecutive baskets in this chain. Therefore, their multiset union $M=B^{(i)}_1\uplus B^{(i)}_2$ is again a cyclically ordered basket.

    Now suppose that $|B^{(i)}_1|<|B^{(i)}_2|$. Since the possible basket sizes in $\mathcal{B}_i$ are $2^r$, $w$, and $2^{r+1}$, the pair
    $\bigl(|B^{(i)}_1|,|B^{(i)}_2|\bigr)$ must fall into one of the following cases:
    \[
        (2^r,w),\qquad (w,2^{r+1}),\qquad \text{or}\qquad (2^r,2^{r+1}).
    \]
    We first consider the case
    $|B^{(i)}_1|=2^r$ and $|B^{(i)}_2|=w$. Since $|B^{(i)}_1|<|B^{(i)}_2|$, the basket $B^{(i)}_1$ is the unique basket of size $2^r$ in $\mathcal{B}_i$. Also, by size-3-symmetry, $B^{(i)}_2$ is the unique exceptional basket of size $w$, and it is wrapped. Thus, for some integers $a$ and $h$, we have $B^{(i)}_2=I_h(a)$.
    By (P3), the baskets of size $2^{r+1}$ followed by the baskets of size $2^r$ form a chain. Since $B^{(i)}_1$ is the only basket of size $2^r$, it must be the block immediately preceding the wrapped basket $B^{(i)}_2$. Therefore $B^{(i)}_1=I_{2^{r-1}}(a-2^{r-1})$.
    Hence, $M=I_{2^{r-1}}(a-2^{r-1})\uplus I_h(a)=I_{2^{r-1}+h}(a-2^{r-1})$ and is thus cyclically ordered in this case.

    Next, suppose that $|B^{(i)}_1|=w$ and $|B^{(i)}_2|=2^{r+1}$. Since $B^{(i)}_1$ has the exceptional size $w$, size-3-symmetry implies that $B^{(i)}_1$ is wrapped. Thus, for some integers $a$ and $h$, we may write $B^{(i)}_1=I_h(a)$.
    Since $B^{(i)}_1$ is the smallest basket and has size $w$, there are no baskets of size $2^r$ in $\mathcal{B}_i$. 
    Hence, the next smallest basket $B^{(i)}_2$ has size $2^{r+1}$. 
    By (P3), the baskets of size $2^{r+1}$ form a chain. 
    Since $B^{(i)}_2$ is the lexicographically smallest basket of size $2^{r+1}$, it is the first block in this chain and thus is the block succeeding the wrapped basket $B_1^{(i)}$. Therefore $B^{(i)}_2=I_{2^r}(a+h)$.
    It follows that $M=I_{h+2^r}(a)$ is cyclically ordered in this case.

    Finally, suppose that $|B^{(i)}_1|=2^r$ and $|B^{(i)}_2|=2^{r+1}$. Since there is no basket of exceptional size $w$ between these two sizes, size-3-symmetry implies that there is no wrapped basket in $\mathcal{B}_i$. Also, since $|B^{(i)}_1|<|B^{(i)}_2|$, the basket $B^{(i)}_1$ is the unique basket of size $2^r$. By (P3), the baskets of size $2^{r+1}$ followed by the baskets of size $2^r$ form a chain. Since there is no wrapped basket, this chain runs from $0$ to $m-1$. Therefore,, the unique basket of size $2^r$ is the final block of this chain, while $B^{(i)}_2$ is the first block of this chain and thus is the block succeeding $B_1^{(i)}$. Hence $B^{(i)}_1=I_{2^{r-1}}(m-2^{r-1})$ and $B^{(i)}_2=I_{2^r}(0)$.
    Thus, $M=I_{2^{r-1}+2^{r}}(m-2^{r-1})$ is cyclically ordered in this case.

    \textbf{Proof that $\mathcal{B}_{i+1}$ satisfies (P2).}
    By the induction hypothesis, $\mathcal{B}_i$ has size-3-symmetry. Thus, there exist integers $r,w$ with $2^r < w < 2^{r+1}$ such that every basket in $\mathcal{B}_i$ has size $2^r$ or $2^{r+1}$, except possibly one wrapped basket of size $w$.

    First, suppose that $|B^{(i)}_1|=|B^{(i)}_2|$. Since there is at most one exceptional basket of size $w$, these two baskets cannot both have size $w$. Thus, after choosing $r$ so that $2^r$ is the minimum basket size that appears, we may assume
    $|B^{(i)}_1|=|B^{(i)}_2|=2^r$. Hence $|M|=2^{r+1}$. Therefore, $\mathcal{B}_{i+1}$ still has size-3-symmetry with respect to $2^r<w<2^{r+1}$.

    Now suppose that $|B^{(i)}_1|<|B^{(i)}_2|$. As in the proof of (P1), there are three cases.

    First, suppose that $|B^{(i)}_1|=2^r$ and $|B^{(i)}_2|=w$. Since $B^{(i)}_1$ is the unique basket of size $2^r$, after the merge, there are no baskets of size $2^r$. Moreover,
    $|M|=2^r+w$.
    Since $2^r<w< 2^{r+1}$, we have
    \[
        2^{r+1}< 2^r+w < 2^r+2^{r+1}<2^{r+2}.
    \]
    Hence $\mathcal{B}_{i+1}$ has size-3-symmetry with respect to $2^{r+1}<2^r+w<2^{r+2}$.

    Next, suppose that $|B^{(i)}_1|=w$ and $|B^{(i)}_2|=2^{r+1}$. Since $B^{(i)}_1$ is the smallest basket lexicographically, there are no baskets of size $2^r$ in $\mathcal{B}_i$. Thus, $
    |M|=w+2^{r+1}$.
    Since $2^r < w < 2^{r+1}$, this size satisfies
    \[
        2^{r+1}<w+2^{r+1} < 2^{r+2}.
    \]
    Therefore, $\mathcal{B}_{i+1}$ has size-3-symmetry with respect to $2^{r+1}<w+2^{r+1}<2^{r+2}$.

    Finally, suppose that $|B^{(i)}_1|=2^r$ and $|B^{(i)}_2|=2^{r+1}$. Since $B^{(i)}_1$ is the unique basket of size $2^r$ and there is no wrapped basket of size $w$, after the merge, no basket of size $2^r$ remains, we have $|M|=2^r+2^{r+1}=3\cdot 2^r$.
    Since
    \[
        2^{r+1}< 3\cdot 2^r < 2^{r+2},
    \] $\mathcal{B}_{i+1}$ has size-3-symmetry with respect to $2^{r+1}<3\cdot 2^r<2^{r+2}$.

    In all cases, $\mathcal{B}_{i+1}$ has size-3-symmetry. Thus, $\mathcal{B}_{i+1}$ satisfies (P2).

    \textbf{Proof that $\mathcal{B}_{i+1}$ satisfies (P3).} First suppose that $|B^{(i)}_1|=|B^{(i)}_2|$. As before, we may assume that these two baskets have size $2^r$. By (P3), the baskets of size $2^r$ form a chain. Since $B^{(i)}_1$ and $B^{(i)}_2$ are the two lexicographically smallest baskets of size $2^r$, they are the first two baskets in this chain. Thus, for some integer $a$, we may write $B^{(i)}_1=I_{2^{r}}(a)$ and $B^{(i)}_2=I_{2^r}(a+2^{r})$
    Therefore the merged basket is
    $M=B^{(i)}_1\uplus B^{(i)}_2=I_{2^{r+1}}(a)$,
    which has size $2^{r+1}$.

    We now check that the chain property is preserved. Removing $B^{(i)}_1$ and $B^{(i)}_2$ from the size-$2^r$ chain leaves a chain of baskets of size $2^r$, possibly empty. Moreover, by the combined chain property in (P3), the baskets of size $2^{r+1}$ followed by the baskets of size $2^r$ form a chain. Hence, if there is already a basket of size $2^{r+1}$ immediately preceding $B^{(i)}_1$, then the last such basket is
    $[a-2^r,a-2^r,\dots, a-1,a-1]$.
    Thus, inserting $M$ after the size-$2^{r+1}$ baskets extends that chain by one consecutive basket.

    Consequently, in $\mathcal{B}_{i+1}$, the baskets of size $2^{r+1}$ form a chain, the remaining baskets of size $2^r$ form a chain, and, in this order, these two subcollections still form a chain.

    Now suppose that $|B^{(i)}_1|<|B^{(i)}_2|$. As in the previous parts, there are three cases.

    First, suppose that $|B^{(i)}_1|=2^r$ and $|B^{(i)}_2|=w$. Since the sizes are strictly increasing, $B^{(i)}_1$ is the unique basket of size $2^r$. Also, by size-3-symmetry, $B^{(i)}_2$ is the unique exceptional wrapped basket of size $w$. Therefore, after merging $B_1^{(i)}$ and $B_2^{(i)}$, every remaining basket in $\mathcal{B}_i$ has size $2^{r+1}$, in which these baskets formed a chain by (P3). Since none of them are changed by the merge, they still form a chain in $\mathcal{B}_{i+1}$. The new basket $M$ is the unique exceptional wrapped basket, as shown in the proof of (P1), and hence is not part of the power-of-two chains. Thus $\mathcal{B}_{i+1}$ satisfies (P3) in this case.

    Next, suppose that $|B^{(i)}_1|=w$ and $|B^{(i)}_2|=2^{r+1}$. Since $B^{(i)}_1$ is the smallest basket, there are no baskets of size $2^r$ in $\mathcal{B}_i$. Hence, after the merge, every remaining basket in $\mathcal{B}_i$ has size $2^{r+1}$, in which these baskets formed a chain by (P3). Removing $B_2^{(i)}$ from the beginning of this chain still leaves a chain, possibly empty. Again, the new basket $M$ is the unique exceptional wrapped basket, as shown in the proof of (P1). Thereforem the power-of-two baskets in $\mathcal{B}_{i+1}$ form the required chains, and $\mathcal{B}_{i+1}$ satisfies (P3).

    Finally, suppose that $|B^{(i)}_1|=2^r$ and $|B^{(i)}_2|=2^{r+1}$. In this case there is no wrapped basket in $\mathcal{B}_i$, and $B^{(i)}_1$ is the unique basket of size $2^r$. By (P3), the baskets of size $2^{r+1}$ followed by the baskets of size $2^r$ form a chain. Thus, the unique basket of size $2^r$ is the final block of this chain, while $B^{(i)}_2$ is the first block. Hence
    \[
        B^{(i)}_1=[m-2^{r-1},m-2^{r-1},\dots, m-1,m-1]
    \]
    and
    \[
        B^{(i)}_2=[0,0,1,1,\dots, 2^r-1,2^r-1].
    \]
    Their merge $M=B^{(i)}_1\uplus B^{(i)}_2$ is therefore the unique wrapped basket in $\mathcal{B}_{i+1}$. All other baskets have size $2^{r+1}$, and they formed a chain in $\mathcal{B}_i$. Removing $B^{(i)}_2$ from the beginning of this chain leaves a chain, possibly empty. Therefore, the power-of-two baskets in $\mathcal{B}_{i+1}$ satisfy the required chain condition, while $M$ is the unique exceptional wrapped basket. Thus $\mathcal{B}_{i+1}$ satisfies (P3) in this case as well.

Thus, $\mathcal{B}_{i+1}$ satisfies (P1), (P2), and (P3), completing the proof.\end{proof}

\begin{lemma}[Wrapped basket structure]\label{lem:wrapped-structure}
For $n\geq 1$ and $1\leq j\leq n$, let $X_j$ be the $j$th singleton basket in $\mathcal{B}_0$, that is, $X_j = \bigl[\lfloor \frac{j-1}{2}\rfloor\bigr]$. Suppose $\mathcal{B}_i$ has size-3-symmetry with respect to $2^r<w<2^{r+1}$, and suppose $W$ is the exceptional wrapped basket of size $w$, where $2^r<w<2^{r+1}$. Write $n=a2^r+s$ with $1\leq s<2^r$. Then $W$ is the multiset union of the final $2^r-s$ baskets in the list $X_1,\dots,X_n$ and the first $2s$ baskets in this list, i.e.\
\[
W=X_{n-(2^r-s)+1}\uplus \cdots \uplus X_n
\uplus X_1\uplus \cdots \uplus X_{2s}.
\]
In particular, $w=2^r+s$.
\end{lemma}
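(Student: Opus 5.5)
The plan is to prove a stronger statement by induction on $i$. It tracks the position of every basket, not just of $W$, in terms of the singleton list. By (P1) and (P3) of Lemma~\ref{lem:structural-props}, every basket of $\mathcal B_i$ is a cyclically contiguous block of the cyclic list $X_1,\dots,X_n$. The identification is that $I_h(a)$ corresponds to the block of $X$'s starting at $X_{2a+1}$. For odd $n$, the single copy of $m-1$ is $X_n$, and this is exactly what makes the block lengths in $X$-indices agree with $|I_h(a)|$. Baskets of size $1$ are single $X_j$'s. Because of this dictionary, sizes of baskets equal the number of $X$'s they contain, so the statement becomes pure counting of positions on a cycle of length $n$.

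The invariant I will carry is the following. Suppose $\mathcal B_i$ has size-3-symmetry with respect to $2^r<w<2^{r+1}$.
\begin{enumerate}
\item[(a)] If there is no wrapped basket, then the baskets, read from $X_1$ onward, are some blocks of size $2^{r+1}$ followed by some blocks of size $2^r$ ending at $X_n$.
\item[(b)] If there is a wrapped basket $W$, then $W$ occupies $X_{n-(2^r-s)+1},\dots,X_n,X_1,\dots,X_{2s}$ with $w=2^r+s$. The remaining $X_{2s+1},\dots,X_{n-2^r+s}$ are tiled, in order, by blocks of size $2^{r+1}$ followed by blocks of size $2^r$.
\end{enumerate}
In case (b), the number of $X$'s outside $W$ is a multiple of $2^r$, so $n\equiv s\pmod{2^r}$ and $n=a2^r+s$ as in the statement; moreover $r$ is determined by $w$. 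The base case is the initial phase of pairing singletons $X_{2j-1},X_{2j}$, which is in form (a).

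For the inductive step I will run through the same merge cases as in the proof of Lemma~\ref{lem:structural-props}.
\begin{enumerate}
\item[(1)] \emph{Two baskets of size $2^r$ are merged.} They are the first two blocks of the $2^r$-run, so the merge lengthens the $2^{r+1}$-run by one block, and $W$ is untouched.
\item[(2)] \emph{Sizes $(2^r,2^{r+1})$ with no wrapped basket.} Here $B_1=X_{n-2^r+1..n}$ and $B_2=X_{1..2^{r+1}}$, and their merge is the first wrapped basket. It has $w'=3\cdot 2^r$, $r'=r+1$ and $s'=2^r$, so it starts at $n-(2^{r+1}-s')+1$ and ends at $2s'$, as required. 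All other blocks have size $2^{r'}$, so $n-3\cdot2^r\equiv0\pmod{2^{r+1}}$, giving $n\equiv s'\pmod{2^{r'}}$.
\item[(3)] \emph{Sizes $(2^r,w)$.} The unique $2^r$-block immediately precedes $W$, so the start of $W$ moves back by $2^r$ and its end stays at $2s$. Thus $r'=r+1$ and $s'=s$, and the new start is $n-(2^{r+1}-s)+1$. All remaining blocks have size $2^{r+1}$, so $n-2^{r+1}-s$ is divisible by $2^{r+1}$.
\item[(4)] \emph{Sizes $(w,2^{r+1})$.} There are no $2^r$-blocks, and $B_2$ is the block $X_{2s+1..2s+2^{r+1}}$. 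The start of $W$ is unchanged and its end becomes $2s+2^{r+1}=2s'$ with $s'=s+2^r<2^{r+1}$ and $r'=r+1$. One checks that $n-(2^{r'}-s')=n-2^r+s$ is the old start. Divisibility again follows because the rest is tiled by $2^{r+1}$-blocks.
\end{enumerate}
The final claim $w=2^r+s$ is then immediate from (b).

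I expect the main obstacle to be bookkeeping rather than any real difficulty. The two delicate points are the index translation between $I_h(a)$ and blocks of $X$'s when $n$ is odd (the lone $m-1$ sits at $X_n$), and checking in each case that the claimed placement of the power-of-two runs is exactly what (P3) gives. For the latter I will lean directly on the descriptions of $B^{(i)}_1,B^{(i)}_2$ already derived in the proof of Lemma~\ref{lem:structural-props}.
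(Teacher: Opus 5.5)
Your proposal is correct and is essentially the paper's proof. It runs the same induction on $i$ through the same events: the wrapped basket is untouched, is created by a $(2^r,2^{r+1})$ merge, absorbs the preceding $2^r$-block, or absorbs the following $2^{r+1}$-block. Your stronger tiling invariant spells out the anchoring of the chain at $X_1$ (or immediately after $W$), which the paper reads off from (P3). Your divisibility count from the tiling replaces the paper's parity-of-$a$ argument for finding the new $s$.

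One small repair is needed. State the dichotomy (a)/(b) by whether an \emph{exceptional} (non-power-of-two) basket is present, not whether a wrapped one is. For $m=1$, and at the last step when $n$ is a power of two, there are wrapped baskets of power-of-two size to which (b) does not apply.
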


\begin{proof}
We prove the result by induction on $i$. If $\mathcal{B}_i$ contains no exceptional wrapped basket, there is nothing to prove. Thus, suppose that $\mathcal{B}_{i+1}$ contains an exceptional wrapped basket $W$.

If $W$ already belonged to $\mathcal{B}_i$ and was not one of the two baskets merged in passing from $\mathcal{B}_i$ to $\mathcal{B}_{i+1}$, then $W$ is unchanged. Moreover, the relevant value of $r$ is unchanged, so the conclusion follows from the induction hypothesis.

It remains to consider the cases in which the wrapped basket is created or changed by the merge. Let $B^{(i)}_1$ and $B^{(i)}_2$ be the two baskets merged in going from $\mathcal{B}_i$ to $\mathcal{B}_{i+1}$.

First, suppose that $\mathcal{B}_i$ has no exceptional wrapped basket, but $\mathcal{B}_{i+1}$ does. By Lemma \ref{lem:structural-props}, this can only happen when the last basket in the combined chain is merged with the first basket in the combined chain. More precisely, there exists an integer $u$ such that $B^{(i)}_1$ is the unique basket of size $2^u$, while $B^{(i)}_2$ has size $2^{u+1}$. Since $B^{(i)}_1$ is the unique basket of size $2^u$, (P3) implies that $B^{(i)}_1$ is the final basket in the combined chain, while $B^{(i)}_2$ is the first basket in the combined chain. Therefore, $B^{(i)}_1$ is the multiset union of the final $2^u$ singleton baskets in $\mathcal{B}_0$, and $B^{(i)}_2$ is the multiset union of the first $2^{u+1}$ singleton baskets in $\mathcal{B}_0$.

Let $W=B^{(i)}_1\uplus B^{(i)}_2$.
Then $W$ is wrapped and has size
$|W|=w=2^u+2^{u+1}=3\cdot 2^u.$
Thus, in $\mathcal{B}_{i+1}$, the basket $W$ is the exceptional wrapped basket whose size lies strictly between $2^{u+1}$ and $2^{u+2}$. Hence, for the statement of the lemma applied to $\mathcal{B}_{i+1}$, we set $2^r=2^{u+1}$.

Since $B^{(i)}_1$ is the unique basket of size $2^u$, $n$ is an odd multiple of $2^u$. Therefore, when we write $n=a2^r+s$
with $1\leq s<2^r$, we have $
s=2^u.$
Finally, since $W=B^{(i)}_1\uplus B^{(i)}_2$, the basket $W$ is exactly the multiset union of the final $2^u=2^r-s$ singleton baskets in $\mathcal{B}_0$ and the first $2^{u+1}=2s$ singleton baskets in $\mathcal{B}_0$. Hence $W$ is the multiset union of the final $2^r-s$ singleton baskets and the first $2s$ singleton baskets, as claimed.

Now suppose that $\mathcal{B}_i$ already contains an exceptional wrapped basket $W_i$. Suppose $W_i$ is exceptional with respect to the two power-of-two sizes $2^u$ and $2^{u+1}$. By the induction hypothesis, writing $n=a2^u+s$
with $1\leq s<2^u$, the basket $W_i$ is the multiset union of the final $2^u-s$ singleton baskets and the first $2s$ singleton baskets.

There are two ways in which $W_i$ can be changed by the next merge.

First, suppose that $B^{(i)}_1$ has size $2^u$ and $B^{(i)}_2=W_i$. By (P3), $B^{(i)}_1$ is the cyclically ordered basket of size $2^u$ immediately preceding $W_i$. Thus, merging $B^{(i)}_1$ into $W_i$ adds the $2^u$ singleton baskets immediately before the final $2^u-s$ singleton baskets already in $W_i$. Therefore, the new wrapped basket
\[
W=B^{(i)}_1\uplus W_i
\]
is the multiset union of the final $2^{u+1}-s$ singleton baskets and the first $2s$ singleton baskets.

The basket $W$ has size
\[
|W|=2^u+(2^u+s)=2^{u+1}+s,
\]
so in $\mathcal{B}_{i+1}$ it is exceptional with respect to the two power-of-two sizes $2^{u+1}$ and $2^{u+2}$. Thus, for $\mathcal{B}_{i+1}$, write $
2^r=2^{u+1}$. Since this merge occurs when there is a unique basket of size $2^u$, the integer $a$ in $n=a2^u+s$ is even. Hence, when we write $
n=a'2^r+s'$ with $1\leq s'<2^r$, we have $s'=s$. Therefore $W$ is the multiset union of the final $2^r-s'=2^{u+1}-s$
singleton baskets and the first $2s'=2s$
singleton baskets, as claimed.

Second, suppose that $B^{(i)}_1=W_i$ and $B^{(i)}_2$ has size $2^{u+1}$. By (P3), $B^{(i)}_2$ is the first basket of size $2^{u+1}$ immediately following $W_i$ in the combined chain. Thus, merging $W_i$ with $B^{(i)}_2$ adds the first $2^{u+1}$ singleton baskets immediately after the first $2s$ singleton baskets already in $W_i$. Therefore, the new wrapped basket
\[
W=W_i\uplus B^{(i)}_2
\]
is the multiset union of the final $2^u-s$ singleton baskets and the first $2s+2^{u+1}$ singleton baskets.

The basket $W$ has size
\[
|W|=(2^u+s)+2^{u+1}=3\cdot 2^u+s,
\]
so in $\mathcal{B}_{i+1}$ it is exceptional with respect to the two power-of-two sizes $2^{u+1}$ and $2^{u+2}$. Thus, for $\mathcal{B}_{i+1}$, write $
2^r=2^{u+1}.$
Since this merge occurs when there are no baskets of size $2^u$ remaining, the quotient $a$ in $n=a2^u+s$ is odd. Hence, when we write $
n=a'2^r+s'$ with $1\leq s'<2^r$, we have $
s'=2^u+s.$ Therefore $
2^r-s'=2^{u+1}-(2^u+s)=2^u-s$
and $2s'=2(2^u+s)=2^{u+1}+2s.$
So $W$ is the multiset union of the final $2^r-s'$ singleton baskets and the first $2s'$ singleton baskets, as claimed.

Since this covers every possible appearance or change of the exceptional wrapped basket, and since its size is $(2^r-s)+2s=2^r+s$, the proof is complete.
\end{proof}

We now show that the lengths of the baskets in the lex-merge strategy agree with the lexicographic order. To do so, it will be helpful to consider baskets of the following form. Let $q=2^{1/m}$.

When $a+h\leq m$, the basket represented by $I_h(a)$ is not wrapped and we have
\[
\ell(I_h(a))=2q^a(1+q+\cdots+q^{h-1})
=2q^a\left(\frac{q^h-1}{q-1}\right).
\]

\begin{lemma}\label{lem:lex-implies-length}
Let $\mathcal{B}_i$ be a collection appearing in the lex-merge strategy. If
$B,C\in \mathcal{B}_i$ and $B\preceq C$, then $\ell(B)\leq \ell(C)$.
\end{lemma}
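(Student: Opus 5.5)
By Lemma~\ref{lem:structural-props}, every collection $\mathcal{B}_i$ consists of cyclically ordered baskets with sizes $2^r$, $2^{r+1}$ and possibly one wrapped basket $W$ of size $w=2^r+s$ (Lemma~\ref{lem:wrapped-structure}). Since $\prec$ compares sizes first, the proof splits into two parts: baskets of different sizes, and baskets of equal size compared lexicographically.

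\textbf{Equal sizes.} Two distinct baskets of the same size $2^r$ (or $2^{r+1}$) lie in the chain from (P3). Neither is wrapped, since the only wrapped basket is $W$ when it exists; when no $W$ exists, the combined chain runs from $0$ to $m-1$, so no power-of-two basket is wrapped there either. Hence they are $I_h(a)$ and $I_h(a')$ with $a<a'$ and the same $h$, and the lexicographically smaller one has the smaller starting point $a$. For even $n$ we have $\ell(I_h(a))=2q^a(q^h-1)/(q-1)$, which is increasing in $a$. For odd $n$ the basket containing $m-1$ has only one copy of it, which only decreases its length. The last basket of the $2^r$-chain can contain a single $m-1$, so I will check it separately: there the basket has size $2^r$ in total and ends at $m-1$, so its elements all exceed those of the earlier basket, and termwise comparison of the sorted elements (each element of $B$ at most the corresponding element of $C$, with equal cardinalities) gives $\ell(B)\le\ell(C)$. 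In fact this termwise argument handles every equal-size non-wrapped pair uniformly, because consecutive chain blocks have sorted elements dominating termwise.

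\textbf{Different sizes.} Here I must show that a smaller basket never outweighs a larger one. The key estimate is that each element contributes a weight $2^{x/m}\in[1,2)$. So a basket of size $k$ has length in $[k,2k)$, and more precisely between $\sum$ of the $k$ smallest and the $k$ largest available weights. For sizes $2^r$ versus $2^{r+1}$, the crude bounds give $\ell<2\cdot 2^r=2^{r+1}\le\ell$, so the inequality holds strictly. The delicate comparisons are with $W$, of size $2^r+s$: I need $\ell(B)\le \ell(W)$ for $|B|=2^r$, and $\ell(W)\le\ell(C)$ for $|C|=2^{r+1}$.

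\textbf{The comparisons with $W$.} I would use the explicit description from Lemma~\ref{lem:wrapped-structure}. $W$ consists of the last $2^r-s$ singletons, which are the largest values near $m-1$, together with the first $2s$ singletons, the values $0,\dots,s-1$ each twice. To compare $W$ with $C$, I pair up elements: the $2^r-s$ large elements of $W$ are each less than $2$, while $C$ has $2^{r+1}$ elements each at least $1$. Pairing each large element of $W$ with two elements of $C$, and each of the remaining $2s$ small elements with one element of $C$, the pairs account for exactly $2(2^r-s)+2s=2^{r+1}$ elements. Each small element $q^x$ with $x<s$ needs a matching element of $C$ with index at least $x$; I would verify this using the chain structure, since $C$ lies after the first $2s$ singletons. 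For $B$ versus $W$, I pair the $2^r$ elements of $B$ with $2^r$ elements of $W$ chosen so each is dominated, namely the $2^r-s$ large tail elements of $W$ and $s$ of its remaining elements. The worry is that $B$ could contain large elements, namely $B$ is the block just before $W$'s tail, with elements up to index $m-1-(2^r-s)/2$. Then I need the $s$ extra elements of $W$, which are at least $1$ but possibly small, to compensate. I would use the leftover surplus: $W$ has $s$ more elements than $B$, and each element $2^{x/m}\ge1$ contributes at least $1$, while the gap in $B$'s favor is bounded by the sum of $(q^{x}-q^{y})$ terms, which is less than $1$ per pair. Making this bookkeeping work is the main obstacle I expect. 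If the pairing proves messy, I would fall back on the closed form $2q^a(q^h-1)/(q-1)$ for both baskets and compare directly using $q^m=2$.
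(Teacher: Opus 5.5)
Your proposal is correct and is essentially the paper's proof: equal-size baskets are ordered by their position in the (P3) chain, and both comparisons with $W$ split $W$ into its tail of $2^r-s$ singletons and its head of $2s$ singletons, using termwise domination on one part and the bound $2^{x/m}\in[1,2)$ on the other. The bookkeeping you flag as the main obstacle already closes, because the $s$ mismatched pairs lose less than $s$ while the $s$ unmatched head elements of $W$ contribute at least $s$. The only difference is the $2^r$ versus $2^{r+1}$ case: your crude bound $\ell(B)<2^{r+1}\leq\ell(C)$ is simpler than the paper's explicit comparison of $I_h(m-h)$ with $I_{2h}(0)$ via $(y-1)^2(y+2)\geq 0$.
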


\begin{proof}
By Lemma \ref{lem:structural-props}, we may assume that $B$ and $C$ are cyclically ordered and that $|B|,|C|\in \{2^r,w,2^{r+1}\}$, where $2^r < w < 2^{r+1}$. Moreover, at most one of $B$ or $C$ is wrapped, and if so, it has size $w$.

Since $\ell(I_h(a))=2q^a(1+q+\cdots+q^{h-1})$ whenever $a+h\leq m$, among non-wrapped cyclically ordered baskets with the same size, lexicographic order agrees with length order.

We may now assume that $|B|< |C|$. We first compare baskets of size $2^r$ with baskets of size $2^{r+1}$. Let $h=2^{r-1}$. Then a basket of size $2^r$ has $h$ doubled elements, while a basket of size $2^{r+1}$ has $2h$ doubled elements. Among non-wrapped cyclically ordered baskets of size $2^r$, the largest length is attained by \[B_0=I_h(m-h)=[(m-h)(m-h)\cdots (m-1)(m-1)].\] Among non-wrapped ordered baskets of size $2^{r+1}$, the smallest length is attained by \[C_0=I_{2h}(0)=[0,0,1,1,\dots,2h-1,2h-1].\] Thus it suffices to show that $\ell(B_0)\leq \ell(C_0)$.

Indeed we have \[\ell(B_0)=2q^{m-h}\left(\frac{q^h-1}{q-1}\right) \qquad \text{and} \qquad \ell(C_0)=2\left(\frac{q^{2h}-1}{q-1}\right)\] Therefore $\ell(B_0)\leq \ell(C_0)$ is equivalent to \[q^{m-h}(q^h-1)\leq q^{2h}-1.\] Since $q^m=2$, it remains to show $2(1-q^{-h})\leq q^{2h}-1$. Setting $y=q^h$, this becomes $2(1-y^{-1})\leq y^2-1$. Since $y>0$, this is equivalent to $0\leq y^3-3y+2=(y-1)^2(y+2)$, which is true. Hence $\ell(B)\leq \ell(C)$.

It remains to handle the cases where one of $B$ and $C$ is the exceptional wrapped basket. For $1\leq j\leq n$, let $
X_j=\left[\left\lfloor \frac{j-1}{2}\right\rfloor\right]$
be the $j$th singleton basket in the initial lex-merge collection $\mathcal{B}_0$. Let $W$ be the exceptional wrapped basket of size $w$. By Lemma \ref{lem:wrapped-structure}, if $2^r<w<2^{r+1}$ and $
n=a2^r+s$ with $1\leq s<2^r$, then $W$ is the multiset union of the final $2^r-s$ singleton baskets and the first $2s$ singleton baskets. That is,
\[
W=X_{n-(2^r-s)+1}\uplus\cdots\uplus X_n\uplus X_1\uplus\cdots\uplus X_{2s}.
\]
In particular, $w=2^r+s$.

First, suppose that $|B|=2^r$ and $C=W$. By (P3) in Lemma \ref{lem:structural-props}, the baskets of size $2^r$ form a chain. Hence, the largest length among baskets of size $2^r$ is attained by
\[
B_0=X_{n-(2^r-s)-2^r+1}\uplus\cdots\uplus X_{n-(2^r-s)}.
\]

Thus, it suffices to show that $\ell(B_0)\leq \ell(W)$.

Split $B_0$ as $B'_0\uplus B''_0$, where $B'_0$ contains the first $s$ singleton baskets of $B_0$ and $B''_0$ contains the remaining $2^r-s$ singleton baskets. That is,
\[
B'_0=X_{n-(2^r-s)-2^r+1}\uplus\cdots\uplus X_{n-(2^r-s)-2^r+s}
\]
and
\[
B''_0=X_{n-(2^r-s)-2^r+s+1}\uplus\cdots\uplus X_{n-(2^r-s)}.
\]
Similarly, split $W$ as $W'\uplus W''$, where
\[
W'=X_{n-(2^r-s)+1}\uplus\cdots\uplus X_n
\]
and
\[
W''=X_1\uplus\cdots\uplus X_{2s}.
\]
Then $|B''_0|=|W'|=2^r-s$, and $W'$ is lexicographically larger than $B''_0$. Hence $\ell(B''_0)\leq \ell(W')$. It remains to compare $B'_0$ and $W''$. Since $|B'_0|=s$ and $|W''|=2s$, and every singleton basket has length at least $1$ and less than $2$, we have
\[
\ell(B'_0)<2s\leq \ell(W'').
\]
Therefore $\ell(B_0)\leq \ell(W)$.

Now suppose that $B=W$ and $|C|=2^{r+1}$. By (P3) of Lemma \ref{lem:structural-props}, the baskets of size $2^{r+1}$ form a chain. Hence, the smallest length among baskets of size $2^{r+1}$ is attained by
\[
C_0=X_{2s+1}\uplus X_{2s+2}\uplus\cdots\uplus X_{2s+2^{r+1}}.
\]
Thus, it suffices to show that $\ell(W)\leq \ell(C_0)$.

Split $C_0$ as $C'_0\uplus C''_0$, where $C'_0$ contains the first $2s$ singleton baskets of $C_0$ and $C''_0$ contains the remaining $2^{r+1}-2s=2(2^r-s)$ singleton baskets. That is,
\[
C'_0=X_{2s+1}\uplus\cdots\uplus X_{4s}
\]
and
\[
C''_0=X_{4s+1}\uplus\cdots\uplus X_{2s+2^{r+1}}.
\]
Similarly, split $W$ as $W'\uplus W''$, where
\[
W'=X_{n-(2^r-s)+1}\uplus\cdots\uplus X_n
\]
and
\[
W''=X_1\uplus\cdots\uplus X_{2s}.
\]
Then $|W''|=|C'_0|=2s$, and $C'_0$ is lexicographically larger than $W''$. Hence $\ell(W'')\leq \ell(C'_0)$. It remains to compare $W'$ and $C''_0$. Since $|W'|=2^r-s$ and $|C''_0|=2^{r+1}-2s=2(2^r-s)$, and every singleton basket has length at least $1$ and less than $2$, we have
\[
\ell(W')<2(2^r-s)\leq \ell(C''_0).
\]
Therefore $\ell(W)\leq \ell(C_0)$.

Combining the cases, whenever $B\preceq C$ in $\mathcal{B}_i$, we have $\ell(B)\leq \ell(C)$. \end{proof}

\subsection{Proof of \cref{thm:lex-merge}}\label{sec:upper}

To prove \cref{thm:lex-merge} it suffices to show that \[\disc(\text{LM}_n) = 2^{1-\frac{1}{m}}.\]

To do so, we first prove a key lemma showing that the discrepancies of
$\mathcal{B}_0,\dots,\mathcal{B}_{n-1}$ in the lex-merge strategy of order $n$
are weakly decreasing. The proof of \cref{thm:lex-merge} will then follow easily.

\begin{lemma}\label{lem:merge-monotonicity}
Fix $n\geq 1$, and let
$\mathcal{B}_0,\mathcal{B}_1,\dots,\mathcal{B}_{n-1}$
be the collections of baskets produced by the lex-merge strategy of order $n$. Then, for every $0\leq i\leq n-2$,
\[
\disc(\mathcal{B}_{i+1})\leq \disc(\mathcal{B}_i).
\]
\end{lemma}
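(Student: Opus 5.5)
By \Cref{lem:lex-implies-length}, the lexicographic order on $\mathcal B_i$ agrees with the length order. Hence $\disc(\mathcal B_i)=\ell(B^{(i)}_t)/\ell(B^{(i)}_1)$, where $B^{(i)}_1$ and $B^{(i)}_t$ are the $\prec$-smallest and $\prec$-largest baskets. The plan is to compare the new maximum and minimum lengths in $\mathcal B_{i+1}$ with the old ones.

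\textbf{The minimum can only grow.} The baskets of $\mathcal B_{i+1}$ are $B^{(i)}_3,\dots,B^{(i)}_t$ together with $M=B^{(i)}_1\uplus B^{(i)}_2$. Each of these has length at least $\ell(B^{(i)}_1)$, since $\ell(M)>\ell(B^{(i)}_2)\ge\ell(B^{(i)}_1)$. So $\min_{\mathcal B_{i+1}}\ell\ge \min_{\mathcal B_i}\ell$.

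\textbf{The maximum.} This is the only place where the argument has content. The new maximum is $\max\{\ell(B^{(i)}_t),\ell(M)\}$. If $\ell(M)\le \ell(B^{(i)}_t)$, then the maximum has not grown and we are done. Otherwise $M$ is the new largest basket, and we must show
\[
\frac{\ell(B^{(i)}_1)+\ell(B^{(i)}_2)}{\ell(B^{(i)}_3)}\le \frac{\ell(B^{(i)}_t)}{\ell(B^{(i)}_1)}.
\]
Here $B^{(i)}_3$ is the new minimum when $t\ge 3$; when $t=2$, $\mathcal B_{i+1}$ is a single basket and $\disc=1$. It suffices to prove the two inequalities
\[
\frac{\ell(B_1)+\ell(B_2)}{\ell(B_3)}\le 2 \quad\text{and}\quad \frac{\ell(B_1)+\ell(B_2)}{\ell(B_3)}\le \frac{\ell(B_t)}{\ell(B_1)} .
\]
In practice I would bound directly using $\ell(B_1)\le\ell(B_2)\le\ell(B_3)$, which gives $\ell(M)\le 2\ell(B_3)$. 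What I would then need is $\ell(B_t)\ge\ell(B_1)+\ell(B_2)\cdot\ell(B_1)/\ell(B_3)$.

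The cleanest route exploits the structure. By (P2) and (P3), the case $\ell(M)>\ell(B_t)$ can only occur when $B_1$ and $B_2$ have the smallest size, so that $M$ has size $\ge$ every other basket. In the main subcase $|B_1|=|B_2|=2^r$ and the merge produces a basket of size $2^{r+1}$. By the chain property, $B_1=I_{h}(a)$, $B_2=I_h(a+h)$, and $B_t$ is the last basket in the combined chain.

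\textbf{Main subcase computation.} Using the explicit formula $\ell(I_h(a))=2q^a\frac{q^h-1}{q-1}$, the inequality reduces to a polynomial inequality in $q^h$ and $q^{a}$. I would compare it with the case where $B_t$ is of size $2^r$ as well: if $B_t$ is the top basket $I_h(m-h)$, then $\ell(B_t)/\ell(B_1)=q^{m-h-a}$. The ratio $\ell(M)/\ell(B_3)$ equals $(1+q^h)/q^{2h}$, computed relative to $B_3=I_h(a+2h)$, or is handled via $B_3$ being of size $2^{r+1}$. It then remains to check an inequality like $(1+y)/y^2\le 2/(y\,q^{a})$, with $y=q^h$, using $q^m=2$.

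\textbf{Remaining cases.} Cases involving the wrapped basket $W$ would be handled with \Cref{lem:wrapped-structure}, splitting $W$ into its head and tail pieces exactly as in the proof of \Cref{lem:lex-implies-length}. The key fact there is that every singleton has length in $[1,2)$.

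I expect the main obstacle to be this casework for the maximum. The most delicate point is when $M$ becomes strictly longest while $B_3$ becomes the new minimum, since both the numerator and the denominator of the discrepancy change. My first attempt there is the telescoping bound $\ell(M)/\ell(B_3)\le \ell(B_t)/\ell(B_1)$, derived from the geometric structure of consecutive chain blocks. The key input is that consecutive equal-size blocks in a chain differ in length by exactly the factor $q^h$.
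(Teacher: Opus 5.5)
\textbf{There is a genuine gap, and it sits in the cases that carry the proof.} Your reduction matches the paper's: the minimum cannot drop, so everything comes down to $\ell(B^{(i)}_1)\,\ell(M)\le\ell(B^{(i)}_3)\,\ell(B^{(i)}_t)$ when $M$ becomes longest. The problem is how you propose to prove this inequality.

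Your claim that $M$ can become longest only when $|B^{(i)}_1|=|B^{(i)}_2|=2^r$ is false. In each of the cases $(|B^{(i)}_1|,|B^{(i)}_2|)=(2^r,w),\,(w,2^{r+1}),\,(2^r,2^{r+1})$ the merged basket has size exceeding $2^{r+1}$. So it is $\prec$-largest and hence, by \Cref{lem:lex-implies-length}, longest in $\mathcal B_{i+1}$. These three cases carry essentially all the content of the lemma.

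Your tool for them (split $W$ and use that singleton lengths lie in $[1,2)$) cannot decide them, because the target inequality is asymptotically an equality. Take the case $(2^r,2^{r+1})$. It has no wrapped basket before the merge, so it is not even among your ``cases involving $W$''. There $B^{(i)}_1=I_h(m-h)$, $B^{(i)}_2=I_{2h}(0)$, $B^{(i)}_3=I_{2h}(2h)$ and $B^{(i)}_t=I_{2h}(m-3h)$. Both sides equal $32h^2(1+o(1))$ as $q^h\to 1$, and the same happens in the two wrapped cases.

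Even a small loss is fatal. Your ``in practice'' sufficient condition $\ell(B^{(i)}_t)\ge\ell(B^{(i)}_1)+\ell(B^{(i)}_1)\ell(B^{(i)}_2)/\ell(B^{(i)}_3)$ reduces in this case to $q^h\le 1$. So it already fails for $n=14$, when $[6,6]$ is merged with $[0,0,1,1]$.

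What is needed, and what the paper does, has three steps:
\begin{enumerate}
\item Identify all four baskets exactly via (P3) and \Cref{lem:wrapped-structure}.
\item Compute their lengths from $\ell(I_h(a))=2q^a(q^h-1)/(q-1)$ and $q^m=2$.
\item Prove the resulting sharp polynomial inequalities, with $x=q^k$, $y=q^h$, $s=2k$:
\begin{enumerate}
\item for $(2^r,w)$: $x^2y^3-x^2y-x^2+2x-y^2\ge 0$ on $1\le x\le y$;
\item for $(w,2^{r+1})$: a factorization $\frac{x^2z-1}{z^2}\,G(x^2,z)$ with $z=q^{h-k}$ and $G(X,z)\ge G(1,z)=2(z-1)^2(z^2+2z+2)$;
\item for $(2^r,2^{r+1})$: $(q^h-1)(q^{2h}+2q^h+2)\ge 0$.
\end{enumerate}
\end{enumerate}

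\textbf{Your equal-size case uses the wrong $B^{(i)}_t$.} When baskets of size $2^{r+1}$ are present, $B^{(i)}_t$ is not the last basket of the combined chain. It is the size-$2^{r+1}$ basket $I_{2h}(a-h)$ immediately preceding $B^{(i)}_1=I_h(a)$. With that choice the inequality collapses to $q^{2a}\le q^{2a+h}$. This is exactly your ``consecutive blocks differ by a factor $q^h$'' idea, and it is the paper's argument.

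Your computation with $B^{(i)}_t=I_h(m-h)$ amounts to $q^a(1+q^h)\le 2q^h$, which is false for large $a$. It is valid only when all baskets have size $2^r$; then $a=0$ and it reads $1\le q^h$.

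Two configurations fall outside $B^{(i)}_t=I_{2h}(a-h)$ and need separate checks. One is the all-size-$2^r$ configuration just described. The other is when the wrapped basket is the only basket of size above $2^r$, so $B^{(i)}_t=W$; an example is $\mathcal B_4$ in $\mathrm{LM}_7$. The paper's write-up passes over both of these too. For even $n$ with $B^{(i)}_3$ of size $2^r$, the second one reduces to $(q^h-q^k)^2\ge 0$.
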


\begin{proof}
Let $\mathcal{B}_i=\{B^{(i)}_1,\dots,B^{(i)}_t\}$, where $B^{(i)}_1\preceq \cdots \preceq B^{(i)}_t$, and let $M=B^{(i)}_1\uplus B^{(i)}_2$ be the basket formed by the merge from $\mathcal{B}_i$ to $\mathcal{B}_{i+1}$. Thus, $\mathcal{B}_{i+1}=\{B^{(i)}_3,\dots,B^{(i)}_t,M\}$.

By Lemma \ref{lem:lex-implies-length}, the lexicographic order on $\mathcal{B}_i$ agrees with length order, so
\[
\ell(B^{(i)}_1)\leq \ell(B^{(i)}_2)\leq \cdots \leq \ell(B^{(i)}_t).
\]
In particular,
\[
\disc(\mathcal{B}_i)=\frac{\ell(B^{(i)}_t)}{\ell(B^{(i)}_1)}.
\]

If $t=2$, then $\mathcal{B}_{i+1}$ consists of a single basket, so $\disc(\mathcal{B}_{i+1})=1\leq \disc(\mathcal{B}_i)$. We may therefore assume $t\geq 3$.

All baskets of $\mathcal{B}_{i+1}$ except $M$ already belonged to $\mathcal{B}_i$. Thus, if $M$ is not the largest basket lexicographically in $\mathcal{B}_{i+1}$, then by Lemma \ref{lem:lex-implies-length}, the maximum length has not increased. Also, the minimum length has not decreased: the shortest basket $B^{(i)}_1$ has been removed, and $\ell(M)=\ell(B^{(i)}_1)+\ell(B^{(i)}_2)>\ell(B^{(i)}_1)$. Therefore, in this case, $\disc(\mathcal{B}_{i+1})\leq \disc(\mathcal{B}_i)$.

It remains to consider the case where $M$ is the largest basket lexicographically in $\mathcal{B}_{i+1}$. Then the smallest basket in $\mathcal{B}_{i+1}$ is $B^{(i)}_3$, and by Lemma \ref{lem:lex-implies-length}, we have
\[
\disc(\mathcal{B}_{i+1})
=
\frac{\ell(M)}{\ell(B^{(i)}_3)}
=
\frac{\ell(B^{(i)}_1)+\ell(B^{(i)}_2)}{\ell(B^{(i)}_3)}.
\]
So it suffices to show that
\[
\frac{\ell(B^{(i)}_1)+\ell(B^{(i)}_2)}{\ell(B^{(i)}_3)}
\leq
\frac{\ell(B^{(i)}_t)}{\ell(B^{(i)}_1)}.
\]

By Lemma \ref{lem:structural-props} we may assume that $\mathcal{B}_i$ has size-3-symmetry with respect to $2^r<w<2^{r+1}$. First, suppose $|B^{(i)}_1|=|B^{(i)}_2|$. We may assume that $|B^{(i)}_1|=|B^{(i)}_2|=2^r$, and let $h=2^{r-1}$. Then $B^{(i)}_1$ and $B^{(i)}_2$ are consecutive baskets in the size-$2^r$ chain, and $M$ is the corresponding basket of size $2^{r+1}$. If $M$ is the longest basket after the merge, then $B^{(i)}_t$ is the size-$2^{r+1}$ basket immediately preceding $B^{(i)}_1$ in the chain. Thus, we have
\[
B^{(i)}_1=I_h(a),\qquad
B^{(i)}_2=I_h(a+h),\qquad
M=I_{2h}(a),
\]
and
\[
\ell(B^{(i)}_3)\geq \ell(I_h(a+2h))
\]
where equality holds if there is another size-$2^r$ basket remaining, and otherwise $B^{(i)}_3$ is even larger. Also,
\[
B^{(i)}_t=I_{2h}(a-h).
\]
 Therefore, it is enough to check
\[
\frac{\ell(I_{2h}(a))}{\ell(I_h(a+2h))}
\leq
\frac{\ell(I_{2h}(a-h))}{\ell(I_h(a))}.
\]
Using $\ell(I_h(a))=2q^a(1+\cdots+q^{h-1})$, the inequality reduces to $
q^{a}q^{a}\leq q^{a-h}q^{a+2h}$,
which is certainly true. Thus, the discrepancy does not increase in this case.

Now suppose $|B^{(i)}_1|<|B^{(i)}_2|$. There are three cases for $(|B^{(i)}_1|,|B^{(i)}_2|)$:
\[
(2^r,w),\qquad (w,2^{r+1}),\qquad (2^r,2^{r+1}).
\]

The first two cases involve the exceptional wrapped basket. By Lemma \ref{lem:wrapped-structure}, the wrapped basket consists of the final $2^r-s$ singleton baskets in $\mathcal{B}_0$ together with the first $2s$ singleton baskets in $\mathcal{B}_0$, where $n=a2^r+s$ and $1\leq s<2^r$.

It suffices to prove the following cases when $n$ is even. Indeed, when $n$ is odd, the only change is that the final element $m-1$ appears once rather than twice. In each case below, this only decreases the length of $M$, so the left-hand side of
\[
\ell(B^{(i)}_1)\ell(M)\leq \ell(B^{(i)}_3)\ell(B^{(i)}_t)
\]
only becomes smaller. We now consider the three cases separately.

\textbf{Case 1: $(|B_1^{(i)}|,|B_2^{(i)}|) = (2^r, w)$.} Let $h=2^{r-1}$. Since $n$ is even, we can write $s=2k$, so $1\leq k<h$. By Lemma \ref{lem:wrapped-structure}, the exceptional wrapped basket has the form
\[
B^{(i)}_2=I_{h-k}(m-h+k)\uplus I_{2k}(0). \qquad\implies\qquad \ell(B_2^{(i)})=2q^{m-h+k}\left(\frac{q^{h-k}-1}{q-1}\right)+2\left(\frac{q^{2k}-1}{q-1}\right)
\]
Since $B^{(i)}_1$ is the unique basket of size $2^r$, we have
\[
B^{(i)}_1=I_h(m-2h+k) \qquad\implies\qquad \ell(B_1^{(i)})=2q^{m-2h+k}\left(\frac{q^{h}-1}{q-1}\right).
\]
Moreover,
\[
M=B^{(i)}_1\uplus B^{(i)}_2
=
I_{2h-k}(m-2h+k)\uplus I_{2k}(0) \qquad\implies\qquad \ell(M) = 2q^{m-2h+k}\left(\frac{q^{2h-k}-1}{q-1}\right) +2\left(\frac{q^{2k}-1}{q-1}\right).
\]
The next smallest basket is the first basket of size $2^{r+1}$ after the wrapped basket, so
\[
B^{(i)}_3=I_{2h}(2k) \qquad\implies\qquad \ell(B_3^{(i)})=2q^{2k}\left(\frac{q^{2h}-1}{q-1}\right),
\]
and the largest basket in $\mathcal{B}_i$ is the final basket of size $2^{r+1}$ before $B^{(i)}_1$, namely
\[
B^{(i)}_t=I_{2h}(m-4h+k) \qquad\implies\qquad \ell(B_t^{(i)})=2q^{m-4h+k}\left(\frac{q^{2h}-1}{q-1}\right).
\]

We need to prove $
\ell(B^{(i)}_1)\ell(M)\leq \ell(B^{(i)}_3)\ell(B^{(i)}_t)$. Using that $q^m=2$, the desired inequality is equivalent to
\[
q^{2h}(1+q^{2k}-2q^{k-2h})
\leq
q^{2k}(q^h-1)(q^h+1)^2,
\]
or, equivalently,
\[
0\leq q^{2k+3h}-q^{2k+h}-q^{2k}+2q^k-q^{2h}.
\]

Set $x=q^k$ and $y=q^h$. Since $1\leq k<h$ and $q>1$, we have $1\leq x<y$. The desired inequality becomes
\[
0\leq x^2y^3-x^2y-x^2+2x-y^2.
\]
Define $
F(x,y)=x^2y^3-x^2y-x^2+2x-y^2.$
We show that $F(x,y)\geq 0$ whenever $1\leq x\leq y$. Then
\[
\frac{\partial F}{\partial x}=2x(y^3-y-1)+2.
\]
If $y^3-y-1\geq 0$, then $\frac{\partial F}{\partial x}>0$. If $y^3-y-1<0$, then $\frac{\partial F}{\partial x}$ is minimized on $1\leq x\leq y$ when $x=y$, and at $x=y$ we have
\[
2y(y^3-y-1)+2=2(y^4-y^2-y+1)
=2(y-1)(y^3+y^2-1)>0.
\]
Thus, $F$ is increasing in $x$ on $1\leq x\leq y$, and so
\[
F(x,y)\geq F(1,y)=y^3-y^2-y+1=(y-1)^2(y+1)\geq 0.
\]

and so $\ell(B^{(i)}_1)\ell(M)\leq \ell(B^{(i)}_3)\ell(B^{(i)}_t)$.

\textbf{Case 2: $(|B_1^{(i)}|,|B_2^{(i)}|)=(w,2^{r+1})$.}
Let $h=2^{r-1}$. Since $n$ is even, we can write $s=2k$, so $1\leq k<h$. By Lemma \ref{lem:wrapped-structure}, the exceptional wrapped basket has the form
\[
B^{(i)}_1=I_{h-k}(m-h+k)\uplus I_{2k}(0). \qquad\implies\qquad \ell(B_1^{(i)})=2q^{m-h+k}\left(\frac{q^{h-k}-1}{q-1}\right)+2\left(\frac{q^{2k}-1}{q-1}\right).
\]
Since there are no baskets of size $2^r$, the basket $B^{(i)}_2$ is the first basket of size $2^{r+1}$ after the wrapped basket. Thus,
\[
B^{(i)}_2=I_{2h}(2k) \qquad\implies\qquad \ell(B_2^{(i)})=2q^{2k}\left(\frac{q^{2h}-1}{q-1}\right).
\]
Moreover,
\[
M=B^{(i)}_1\uplus B^{(i)}_2
=
I_{h-k}(m-h+k)\uplus I_{2k+2h}(0) \qquad\implies\qquad \ell(M)=2q^{m-h+k}\left(\frac{q^{h-k}-1}{q-1}\right)+2\left(\frac{q^{2k+2h}-1}{q-1}\right).
\]
The next smallest basket is the next basket of size $2^{r+1}$ after $B^{(i)}_2$, so
\[
B^{(i)}_3=I_{2h}(2k+2h) \qquad\implies\qquad \ell(B_3^{(i)})=2q^{2k+2h}\left(\frac{q^{2h}-1}{q-1}\right),
\]
and the largest basket in $\mathcal{B}_i$ is the final basket of size $2^{r+1}$ before the wrapped basket, namely
\[
B^{(i)}_t=I_{2h}(m-3h+k) \qquad\implies\qquad \ell(B_t^{(i)})=2q^{m-3h+k}\left(\frac{q^{2h}-1}{q-1}\right).
\]

We need to prove $
\ell(B^{(i)}_1)\ell(M)\leq \ell(B^{(i)}_3)\ell(B^{(i)}_t).$

Using that $q^m=2$, the desired inequality is equivalent to
\[
\left(1+q^{2k}-2q^{k-h}\right)\left(1+q^{2k+2h}-2q^{k-h}\right)
\leq
2q^{3k-h}(q^{2h}-1)^2.
\]
Set $x=q^k$ and $z=q^{h-k}$. Then $x\geq 1$, $z>1$, and this inequality becomes
\[
\left(1+x^2-\frac{2}{z}\right)\left(1+x^4z^2-\frac{2}{z}\right)
\leq
\frac{2x^2}{z}(x^2z^2-1)^2.
\]
After moving the left-hand side to the right, the difference factors as
\[
\frac{x^2z-1}{z^2}
\left(2x^4z^4-x^4z^3+x^2z^3-3x^2z^2+z^2-4z+4\right).
\]
The first factor is positive. For the second factor, set $X=x^2$ and define
\[
G(X,z)=2X^2z^4-X^2z^3+Xz^3-3Xz^2+z^2-4z+4.
\]
Since
\[
\frac{\partial G}{\partial X}=z^2\bigl(4Xz^2-2Xz+z-3\bigr),
\]
and $X\geq 1$, we have
\[
\frac{\partial G}{\partial X}\geq z^2(4z^2-z-3)=z^2(z-1)(4z+3)>0.
\]
Thus, $G$ is increasing in $X$, so
\[
G(X,z)\geq G(1,z)=2(z-1)^2(z^2+2z+2)\geq 0.
\]
Therefore, the desired inequality holds, and so $
\ell(B^{(i)}_1)\ell(M)\leq \ell(B^{(i)}_3)\ell(B^{(i)}_t)$.

\textbf{Case 3: $(|B_1^{(i)}|,|B_2^{(i)}|)=(2^r,2^{r+1})$.}
Let $h=2^{r-1}$. In this case, there is no wrapped basket in $\mathcal{B}_i$. Moreover, $B^{(i)}_1$ is the unique basket of size $2^r$. Thus, $B^{(i)}_1$ is the final basket in the combined chain, while $B^{(i)}_2$ is the first basket in the combined chain. Hence,

\[
B^{(i)}_1=I_h(m-h),\qquad
B^{(i)}_2=I_{2h}(0),
\qquad\Longrightarrow\qquad
M=I_h(m-h)\uplus I_{2h}(0).
\]

The next smallest basket is the next basket of size $2^{r+1}$, so
\[
B^{(i)}_3=I_{2h}(2h),
\]
and the largest basket in $\mathcal{B}_i$ is the final basket of size $2^{r+1}$ before $B^{(i)}_1$, namely
\[
B^{(i)}_t=I_{2h}(m-3h).
\]

We need to prove that
\[
\ell(B^{(i)}_1)\ell(M)\leq \ell(B^{(i)}_3)\ell(B^{(i)}_t).
\]
Using the formula for $\ell(I_k(a))$ and $q^m=2$, the desired inequality is equivalent to
\[
2q^{-h}+(q^h+1)\leq (q^h+1)^2.
\]
After rearranging, this becomes
\[
0\leq q^{2h}+q^h-2q^{-h}.
\]
Multiplying by $q^h>0$, this is equivalent to
\[
0\leq q^{3h}+q^{2h}-2=(q^h-1)(q^{2h}+2q^h+2),
\]
which is true. Thus $\ell(B^{(i)}_1)\ell(M)\leq \ell(B^{(i)}_3)\ell(B^{(i)}_t)$.

Therefore, in all cases, $
\disc(\mathcal{B}_{i+1})\leq \disc(\mathcal{B}_i)$.
\end{proof}

\begin{theorem}\label{conj:lex-merge}
For any $n\geq 1$,
\[
\disc(\mathrm{LM}_n)=2^{1-\frac1m},
\]
where $m=\left\lceil \frac n2\right\rceil$.
\end{theorem}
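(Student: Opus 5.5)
By \Cref{lem:merge-monotonicity}, the discrepancies $\disc(\mathcal B_0)\ge\disc(\mathcal B_1)\ge\cdots\ge\disc(\mathcal B_{n-1})$ are weakly decreasing. Hence $\disc(\mathrm{LM}_n)=\disc(\mathcal B_0)$, and the whole statement reduces to computing the discrepancy of the initial collection.

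In $\mathcal B_0$ every basket is a singleton $[x]$ with $0\le x\le m-1$, so $\ell([x])=2^{x/m}=q^x$ with $q=2^{1/m}$. The element $0$ appears in $\mathcal B_0$ for $n\ge 2$, and the element $m-1$ always appears, at least once. Since $q^x$ is increasing in $x$, the maximum length is $q^{m-1}$ and the minimum is $q^0=1$. Therefore
\[
\disc(\mathcal B_0)=q^{m-1}=2^{(m-1)/m}=2^{1-\frac1m}.
\]
For $n=1$ we have $m=1$, and $\mathcal B_0=\{[0]\}$ consists of one basket. Then both sides equal $1=2^{0}$, so this case is consistent.

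It remains to translate back to the splitting formulation. Reversing the merge sequence gives a strategy $(\mathcal I_1,\dots,\mathcal I_n)$ with $\mathcal I_{j}$ corresponding to $\mathcal B_{n-j}$, after normalizing by $L$, which leaves every ratio unchanged. So $\disc$ of this strategy equals $\max_i\disc(\mathcal B_i)=2^{1-1/m}$, which yields \cref{thm:lex-merge}.

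The expansion $2^{1-1/\lceil n/2\rceil}=2\cdot e^{-\ln 2/\lceil n/2\rceil}=2-\frac{4\ln 2}{n}+O(1/n^2)$ is a routine Taylor estimate. Its only detail is that $1/\lceil n/2\rceil=2/n+O(1/n^2)$.

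No step here is a real obstacle, since all the difficulty sits in \Cref{lem:merge-monotonicity}, which is assumed. The only care needed is to check that the monotonicity lemma's hypotheses cover every index $0\le i\le n-2$, including the initial pairing merges, where all baskets have sizes $1$ and $2$. This holds because \Cref{lem:structural-props} is stated for all $i$, and its base cases give size-3-symmetry with $r=0$ trivially.
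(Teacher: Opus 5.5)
Your proposal is correct and is essentially the paper's proof: \Cref{lem:merge-monotonicity} reduces $\disc(\mathrm{LM}_n)$ to $\disc(\mathcal B_0)$, which equals $2^{(m-1)/m}/2^{0}=2^{1-1/m}$. The only cosmetic difference is that you read off the extreme lengths in $\mathcal B_0$ directly from the monotonicity of $x\mapsto 2^{x/m}$ rather than citing \Cref{lem:lex-implies-length}; your extra checks (the case $n=1$, the translation back to splitting, the Taylor expansion) are harmless.
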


\begin{proof}
Let $q=2^{1/m}$.
By Lemma \ref{lem:merge-monotonicity}, the discrepancies along the lex-merge strategy are weakly decreasing:
\[
\disc(\mathcal{B}_0)\geq \disc(\mathcal{B}_1)\geq \cdots \geq \disc(\mathcal{B}_{n-1}).
\]

Thus, the maximum discrepancy over the whole lex-merge strategy occurs at the initial collection $\mathcal{B}_0$. It remains only to compute $\disc(\mathcal{B}_0)$.

By definition, $\mathcal{B}_0$ consists of singleton baskets. The smallest singleton lexicographically is $[0]$, whose length is $\ell([0])=q^0=1$, and the largest singleton lexicographically is $[m-1]$, whose length is $\ell([m-1])=q^{m-1}$. By Lemma \ref{lem:lex-implies-length}, these are respectively the minimum- and maximum-length baskets in $\mathcal{B}_0$. Therefore
\[
\disc(\mathcal{B}_0)=\frac{q^{m-1}}{1}=q^{m-1}.
\]
Since $q=2^{1/m}$, this becomes
\[
q^{m-1}=\bigl(2^{1/m}\bigr)^{m-1}=2^{1-\frac1m}.
\]

as desired.
\end{proof}

\section{Concluding remarks}\label{sec:conclusion}

The major remaining problem left open by this paper is to verify or refute \Cref{conjvalue}, which asserts that $\disc(n)=2^{1-1/\lceil n/2\rceil}$ for all positive integers $n$.
We proved that $\disc(n)\leq 2^{1-1/\lceil n/2\rceil}$, so all that remains is to establish a matching lower-bound.
If one could replace the ``$k+2i-1\leq n$'' clause in \Cref{almostEqual} by ``$k+i\leq n$'', then we would immediately (by following the remainder of the proof of \Cref{lower}) arrive at a proof that $\disc(n)\geq 2^{1-1/\lceil n/2\rceil}$.
One way to do so would be to prove that in any optimal strategy, given a partition into intervals of lengths $x_1\geq x_2\geq\dots\geq x_k$, upon dividing the largest interval into $x_1=x_1'+x_1''$ with $x_1'\geq x_1''$, we have $x_k\geq x_1'\geq x_1''$ (as opposed to simply $x_k\geq x_1''$).
This is the case in the lex-merge strategy, but it remains unclear to us how to prove this is the case in general.
\medskip

Finally, we remark on a difference between de Bruijn--Erd\H{o}s' problem and ours that was hinted at in the introduction.
De Bruijn and Erd\H{o}s were actually concerned with infinite-length strategies instead of those finite-length strategies considered in this paper.
For an infinite-length strategy $\mcal S=(\mcal I_1,\mcal I_2,\dots)$, define
\[
    \disc_n(\mcal S)=\max_{t\leq n}\ \disc(\mcal I_t).
\]
Naturally, $\disc_n(\mcal S)\geq\disc(n)$ for any infinite-length strategy $\mcal S$ and any positive integer $n$, but the reverse inequality may not hold.
De Bruijn and Erd\H{o}s actually showed that there exists an infinite-length strategy $\mcal S$ for which
\[
    \disc_n(\mcal S)={\log(1+1/n)\over \log\bigl((1-{1\over 2n})^{-1}\bigr)}=2-{3\over 2n}+O\biggl({1\over n^2}\biggr).
\]
It is natural to wonder if the first-order term of ${3\over 2n}$ here is tight.
\begin{question}\label[question]{infinite}
    If $\mcal S$ is an infinite-length strategy, then must
    \[
        \disc_n(\mcal S)\geq 2-{3\over 2n}-O\biggl({1\over n^2}\biggr)?
    \]
\end{question}
If \Cref{conjvalue} holds, then
\[
    \disc_n(\mcal S)\geq \disc(n)\geq 2-{4\ln 2\over n}-O\biggl({1\over n^2}\biggr),
\]
for any infinite-length strategy $\mcal S$ and any positive integer $n$.
We suspect that, even if \Cref{conjvalue} holds, this lower-bound is not tight given that the lex-merge strategy seems intimately tied to the fact that the strategy has finite length.
Of course, this suspicion  is certainly true if \Cref{infinite} has a positive answer.
Regardless to the truth of \Cref{infinite}, we confidently conjecture the following weaker statement which would show a separation between finite- and infinite-length strategies:
\begin{conjecture}
    For any infinite-length strategy $\mcal S$,
    \[
        \liminf_{n\to\infty}{\disc_n(\mcal S)-\disc(n)\over 1/n}>0.
    \]
\end{conjecture}


\end{document}